\newcommand{\rmnum}[1]{\romannumeral #1}
\newcommand{\Rmnum}[1]{\expandafter\@slowromancap\romannumeral #1@}
\newtheorem{theorem}{Theorem}[section]
\newtheorem{lemma}[theorem]{Lemma}
\theoremstyle{definition}
\newtheorem{defn}[theorem]{Definition}
\newtheorem{exa}[theorem]{Example}
\newtheorem{corollary}[theorem]{Corollary}
\newtheorem{prop}[theorem]{Proposition}
\theoremstyle{remark}
\numberwithin{equation}{section}
\begin{document}
\title[differential graded Poisson Hopf algebras]{\bf The structures on the universal enveloping algebras of differential graded Poisson Hopf algebras}
\author{MentTian Guo}
\address{Guo: Department of Mathematics, Zhejiang Normal University,
Jinhua, Zhejiang, 321004 P.R. China} \email{1448199409@qq.com}
\author{Xianguo Hu}
\address{Hu: Department of Mathematics, Zhejiang Normal University, Jinhua, Zhejiang, 321004 P.R. China}
\email{2015210420@zjnu.edu.cn}
\author{Jiafeng L\"u$^*$}
\address{L\"u: Department of Mathematics, Zhejiang Normal University, Jinhua, Zhejiang 321004, P.R. China}
\email{jiafenglv@zjnu.edu.cn}

\author{Xingting Wang}
\address{Wang: Department of Mathematics, Temple University, Philadelphia 19122, USA}
\email{xingting@temple.edu}

\keywords{differential graded Poisson
algebras, differential graded Hopf algebras, differential graded Poisson Hopf algebras, universal
enveloping algebras}
\subjclass[2010]{16E45, 16S10, 17B35, 17B63}
\thanks{This work was supported by National Natural Science Foundation of China (Grant No.11571316) and Natural
Science Foundation of Zhejiang Province (Grant No. LY16A010003).}
\thanks{*corresponding author}
\maketitle
\begin{abstract} In this paper, the so-called differential graded (DG for short) Poisson
Hopf algebra is introduced, which can be considered as a natural extension of
Poisson Hopf algebras in the differential graded setting. The structures on the universal enveloping algebras of differential graded Poisson Hopf algebras are discussed.
\end{abstract}
\medskip
\section{Introduction}

Poisson algebras appear naturally in Hamiltonian mechanics, and play a central role in the study
of Poisson geometry and quantum groups. With the development of Poisson algebras in the past decades,
many important generalizations have been obtained in both commutative and noncommutative settings:
Poisson PI algebras \cite{MPR}, graded Poisson algebras \cite{CFL}, double Poisson algebras \cite{V}, Quiver Poisson algebras \cite{YYZ}, noncommutative Leibniz-Poisson algebras \cite{CD},
Left-right noncommutative Poisson algebras \cite{CDL} and differential graded Poisson algebras \cite{LWZ}, etc. One of most interesting features in this area is the Poisson universal enveloping algebra, which was first introduced by Oh \cite{O} in order to describe the category of Poisson modules. Since then, Poisson universal enveloping algebras have been studied in a series of papers \cite{OPS, Um, YYY}. In particular, the third author and the fourth author of the present paper studied the universal enveloping algebras of Poisson Ore-extensions and DG Poisson algebra \cite{LWZ2, LWZ}.

We know that, the notion of Poisson Hopf algebras originally arises from Poisson geometry and quantum groups. For instance, the coordinate ring of a Poisson algebraic group is a Poisson Hopf algebra \cite{KS}. Recently, Poisson Hopf algebras are studied by many authors from different perspectives \cite{LWZ1, Oh, Oh2, T}. In \cite{Oh}, Oh developed the theory of universal enveloping algebras for Poisson algebras, and then proved that the universal enveloping algebra $A^e$ of a Poisson Hopf algebra $A$ is a Hopf algebra. In addition, the last two authors of this paper studied the most basic properties for universal enveloping algebras of Poisson Hopf algebras \cite{LWZ1}, which would help us to understand Hopf algebras in general.

Motivated by the notion of differential graded Poisson algebra, our aim in this paper is to study Poisson Hopf algebras and their universal enveloping algebras in the DG setting. Roughly speaking, a DG Poisson Hopf algebra is a DG Poisson algebra together with a Hopf structure satisfying certain compatible conditions; see Definition \ref{defn3.1}. As for universal enveloping algebras, there are many equivalent ways to define the universal enveloping algebra of a DG Poisson algebra, among which we choose to use the universal property; see Definition \ref{defn2}.

Note that the universal enveloping algebra $A^e$ of a Poisson bialgebra $A$ is a bialgebra, as an extension, we prove the following result:
\begin{theorem}\label{thm35}
If $(A, u, \eta, \Delta, \varepsilon, \{\cdot, \cdot\}, d)$ is a DG
Poisson bialgebra, then
\begin{center}
$(A^e, u^e, \eta^e, \Delta^e, \varepsilon^e, d^e)$
\end{center}
is a DG bialgebra such that
\begin{align*}
\Delta^em&=(m\otimes m)\Delta   &  \Delta^eh&=(m\otimes h+h\otimes
m)\Delta\\
\varepsilon^em&=\varepsilon  &   \varepsilon^eh&=0.
\end{align*}
\end{theorem}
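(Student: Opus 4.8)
The plan is to obtain $\Delta^e$ and $\varepsilon^e$ from the universal property of the universal enveloping algebra (Definition \ref{defn2}) applied to suitable pairs of maps, and then to verify the DG bialgebra axioms by evaluating on the DG subspaces $m(A)$ and $h(A)$, which generate $A^e$ as a DG algebra. First I would observe that $A^e\otimes A^e$ is a DG algebra (with the Koszul sign convention) and consider the two maps $\alpha:=(m\otimes m)\Delta$ and $\beta:=(m\otimes h+h\otimes m)\Delta$ from $A$ to $A^e\otimes A^e$. To invoke the universal property I must check that $\alpha$ is a morphism of DG algebras, that $\beta$ is a morphism of DG Lie algebras into $(A^e\otimes A^e,[\,\cdot,\cdot\,])$, that the mixed relations linking $\alpha$ and $\beta$ (the signed Leibniz-type identity for $\beta(ab)$ and the identity expressing $\alpha(\{a,b\})$ as a commutator) hold, and that $\alpha,\beta$ commute with the differentials. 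That $\alpha$ is a DG algebra map is immediate since it is the composite of the DG algebra maps $\Delta$ and $m\otimes m$; the remaining identities are checked by expanding in Sweedler notation and using (i) that $\Delta\colon A\to A\otimes A$ is a morphism of DG Poisson algebras, so it respects both products and brackets, $A\otimes A$ carrying the tensor DG Poisson structure, and (ii) the defining relations of $A^e$ between $m$ and $h$. This produces a unique DG algebra map $\Delta^e\colon A^e\to A^e\otimes A^e$ with $\Delta^e m=(m\otimes m)\Delta$ and $\Delta^e h=(m\otimes h+h\otimes m)\Delta$.

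For $\varepsilon^e$, I would apply the universal property to the DG algebra $k$ equipped with the pair $(\varepsilon,0)$: since $\varepsilon\colon A\to k$ is a morphism of DG Poisson algebras and $k$ carries the zero bracket, every condition required by the universal property is trivially satisfied, and one obtains a unique DG algebra map $\varepsilon^e\colon A^e\to k$ with $\varepsilon^e m=\varepsilon$ and $\varepsilon^e h=0$. By construction $\Delta^e$ and $\varepsilon^e$ are morphisms of DG algebras, and $u^e,\eta^e,d^e$ are the given DG algebra structure of $A^e$; so once coassociativity and the counit axiom are established, $(A^e,u^e,\eta^e,\Delta^e,\varepsilon^e,d^e)$ will be a DG bialgebra.

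Coassociativity and the counit axiom are then checked on generators. The maps $(\Delta^e\otimes\mathrm{id})\Delta^e$ and $(\mathrm{id}\otimes\Delta^e)\Delta^e$ are both DG algebra maps $A^e\to A^e\otimes A^e\otimes A^e$; on $m(a)$ they both equal $(m\otimes m\otimes m)$ applied to the common value of $(\Delta\otimes\mathrm{id})\Delta(a)$ and $(\mathrm{id}\otimes\Delta)\Delta(a)$, and on $h(a)$ a short Sweedler computation reduces both to $\sum h(a_1)\otimes m(a_2)\otimes m(a_3)+m(a_1)\otimes h(a_2)\otimes m(a_3)+m(a_1)\otimes m(a_2)\otimes h(a_3)$ using coassociativity of $\Delta$; as these maps agree on $m(A)\cup h(A)$ they coincide. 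Likewise $(\varepsilon^e\otimes\mathrm{id})\Delta^e$ and $(\mathrm{id}\otimes\varepsilon^e)\Delta^e$ are DG algebra endomorphisms of $A^e$ sending $m(a)\mapsto m(\sum\varepsilon(a_1)a_2)=m(a)$ and $h(a)\mapsto h(\sum\varepsilon(a_1)a_2)=h(a)$ (the summands containing $\varepsilon^e h=0$ dropping out), by the counit axiom of $A$; hence both equal $\mathrm{id}_{A^e}$.

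The one genuinely nontrivial point is the verification in the first step that $\beta=(m\otimes h+h\otimes m)\Delta$ satisfies the mixed Leibniz relation and is a Lie morphism into the graded commutator bracket of $A^e\otimes A^e$, since this forces one to track simultaneously the Koszul signs of the tensor product $A^e\otimes A^e$, the Sweedler expansion of $\Delta$, the tensor DG Poisson structure on $A\otimes A$, and the (anti)commutation relations among $m$ and $h$ inside $A^e$. Once that bookkeeping is done, the rest is formal.
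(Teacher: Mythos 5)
Your proposal is correct and follows essentially the same route as the paper: obtain $\Delta^e$ and $\varepsilon^e$ from the universal property and then verify the coalgebra axioms on the generators $m(A)\cup h(A)$, with the coderivation condition coming for free from $\Delta^e,\varepsilon^e$ being DG algebra maps. The only difference is one of packaging: the paper gets $\Delta^e$ by combining Lemma \ref{lem24} (which identifies $(A^e\otimes A^e,\,m\otimes m,\,m\otimes h+h\otimes m)$ as the universal enveloping algebra of $A\otimes A$) with the functoriality Lemma \ref{lem25}, so the sign-heavy verification you defer for $\beta=(m\otimes h+h\otimes m)\Delta$ is exactly the content already absorbed into Lemma \ref{lem24} rather than something to be redone.
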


It should be noted that the universal enveloping algebra $A^e$ of a DG Poisson Hopf algebra $A$ is just a DG bialgebra. But in some special cases,
$A^e$ can be endowed with the Hopf structure, such that $A^e$ becomes a DG Hopf algebra:
\begin{theorem}\label{coro36}
Let $(A, u, \eta, \Delta, \varepsilon, S, \{\cdot, \cdot\}, d)$ be a
DG Poisson Hopf algebra. Suppose that $A^{eop}$ is the DG opposite algebra of $A^e$. Then:
 \begin{enumerate}
   \item There exist a DG algebra homomorphism $S^e:A^e\rightarrow A^{eop}$ such that
         \begin{center}
         $S^em=mS$,\quad \quad $S^eh=hS$.
         \end{center}
   \item  $\{S(a_{(1)}), a_{(2)}\}=0$, where $\Delta(a)=a_{(1)}\otimes a_{(2)}$ for all $a\in A$, if and only if
          \begin{center}
          $(A^e, u^e, \eta^e, \Delta^e, \varepsilon^e, S^e, d^e)$
          \end{center}
          is a DG Hopf algebra.
 \end{enumerate}
\end{theorem}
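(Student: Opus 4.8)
The plan is to produce $S^e$ from the universal property of $A^e$ and then to verify the antipode axioms on a set of algebra generators. Recall (Definition \ref{defn2} and \cite{LWZ}) that $A^e$ carries two canonical maps $m\colon A\to A^e$ and $h\colon A\to A^e$: $m$ is a homomorphism of DG algebras, and together with $h$ it satisfies, up to Koszul signs, the DG analogues of Oh's relations
\begin{align*}
h\{a,b\}&=[h(a),h(b)], & h(ab)&=m(a)h(b)+(-1)^{|a||b|}m(b)h(a),\\
m\{a,b\}&=[h(a),m(b)], & h(1)&=0,\quad m(1)=1,\quad d^em=md,\quad d^eh=hd;
\end{align*}
moreover $A^e$ is generated as a DG algebra by $m(A)\cup h(A)$, the map $m$ is injective, and $A^e$ enjoys the universal property that every pair $(\alpha,\beta)$ consisting of a DG algebra homomorphism $\alpha\colon A\to B$ and a map $\beta\colon A\to B$ satisfying these relations in a DG algebra $B$ extends uniquely to a DG algebra homomorphism $A^e\to B$. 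I will also use the properties of the antipode of a DG Poisson Hopf algebra contained in Definition \ref{defn3.1}: $S$ is a homomorphism of DG algebras (hence, $A$ being graded commutative, also an anti-homomorphism), $Sd=dS$, $S$ reverses the bracket in the sense $S\{a,b\}=-\{S(a),S(b)\}$ up to Koszul sign, $S^2=\mathrm{id}$, and the antipode identities $\sum S(a_{(1)})a_{(2)}=\varepsilon(a)1=\sum a_{(1)}S(a_{(2)})$ hold in $A$.

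For part (1), I would feed the pair $(\alpha,\beta):=(mS,hS)$ into the universal property with target the DG opposite algebra $A^{eop}$ (which is again a DG algebra, since $d^e$ remains a graded derivation for the opposite product). The map $\alpha=mS$ is a homomorphism of DG algebras: compatibility with the differentials is immediate from $Sd=dS$ and $d^em=md$, while for the product one uses that $m\colon A\to A^{eop}$ is a homomorphism precisely because $A$ is graded commutative, composed with the algebra homomorphism $S$. One then checks that $(mS,hS)$ satisfies the displayed relations read in $A^{eop}$: passing to the opposite product reverses the sign of every graded commutator and $S$ reverses the sign of the bracket, and these two sign changes, together with the graded antisymmetry of $\{\cdot,\cdot\}$, make each relation for $(mS,hS)$ in $A^{eop}$ collapse onto the corresponding relation for $(m,h)$ in $A^e$ — with one application of an antipode identity of $A$ for the relations involving the product. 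The universal property then yields a unique DG algebra homomorphism $S^e\colon A^e\to A^{eop}$ with $S^em=mS$ and $S^eh=hS$, which is (1).

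For part (2), by Theorem \ref{thm35} the tuple $(A^e,u^e,\eta^e,\Delta^e,\varepsilon^e,d^e)$ is already a DG bialgebra, so everything comes down to whether $S^e$ is an antipode, i.e.\ whether $S^e*\mathrm{id}=\eta^e\varepsilon^e=\mathrm{id}*S^e$ in the convolution algebra $\mathrm{End}(A^e)$. Since $S^e$ is an algebra anti-homomorphism while $\Delta^e$ and $\varepsilon^e$ are algebra homomorphisms, the elements of $A^e$ on which both convolution identities hold form a subalgebra, so it suffices to test them on the generators $m(a)$ and $h(a)$. On $m(a)$, using $\Delta^em=(m\otimes m)\Delta$, $S^em=mS$, multiplicativity of $m$, the antipode identity in $A$, and $\varepsilon^em=\varepsilon$, one gets $(S^e*\mathrm{id})(m(a))=m\bigl(\sum S(a_{(1)})a_{(2)}\bigr)=\varepsilon(a)1=\eta^e\varepsilon^e(m(a))$ and, symmetrically, $(\mathrm{id}*S^e)(m(a))=\eta^e\varepsilon^e(m(a))$; no hypothesis is needed. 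On $h(a)$, using $\Delta^eh=(m\otimes h+h\otimes m)\Delta$, the relations $S^em=mS$, $S^eh=hS$, $m\{x,y\}=[h(x),m(y)]$ and the Leibniz relation, $h(1)=0$, and the antipode identity of $A$, a short computation gives
\begin{align*}
(S^e*\mathrm{id})(h(a))&=m\bigl(\textstyle\sum\{S(a_{(1)}),a_{(2)}\}\bigr), & (\mathrm{id}*S^e)(h(a))&=m\bigl(\textstyle\sum\{a_{(1)},S(a_{(2)})\}\bigr),
\end{align*}
whereas $\eta^e\varepsilon^e(h(a))=0$. Applying $S$ to the second bracket expression and using $S\{x,y\}=-\{S(x),S(y)\}$ and $S^2=\mathrm{id}$ shows $\sum\{a_{(1)},S(a_{(2)})\}=-S\bigl(\sum\{S(a_{(1)}),a_{(2)}\}\bigr)$, so — since $m$ is injective and $S$ is bijective — both right-hand sides vanish for every $a\in A$ if and only if $\sum\{S(a_{(1)}),a_{(2)}\}=0$ for every $a\in A$. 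This is exactly the stated equivalence.

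I expect the main obstacle to lie in the Koszul-sign bookkeeping for part (1): one must track at once the opposite-product signs, the sign in $S\{a,b\}=-\{S(a),S(b)\}$, and the signs built into the DG Oh relations, and confirm that they cancel in each relation; a small prior point is to extract from Definition \ref{defn3.1} that $S$ really has all the properties listed above. Once these are settled, the structural content — invoking the universal property in (1), and reducing to generators and performing the convolution computation in (2) — is routine.
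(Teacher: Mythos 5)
Your proposal is correct and takes essentially the same route as the paper: part (1) is the universal property of $A^e$ applied to the pair $(mS,hS)$ with target $A^{eop}$, which the paper packages as functoriality (Lemma \ref{lem25}) combined with the identification of $A^{eop}$ as the enveloping algebra of $A$ with negated bracket (Proposition \ref{prop34}); part (2) is the same computation on the generators $m(a)$, $h(a)$ producing $m(\{S(a_{(1)}),a_{(2)}\})$ on $h(a)$ and invoking injectivity of $m$, with your $S^2=\mathrm{id}$ argument relating the two one-sided conditions being exactly the paper's Lemma \ref{lem324}.
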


The paper is organized as follows. In Section 2, we briefly review
some basic concepts and results related to DG algebras, DG Poisson
algebras, DG Hopf algebras and universal enveloping algebras of DG
Poisson algebras. Section 3 is devoted to the study of some
definitions and properties of DG Poisson Hopf algebras and universal
enveloping algebras. In particular, we show that there is a natural
DG bialgebra structure on the universal enveloping algebra $A^e$ for
a DG Poisson bialgebra $A$.

Throughout the whole paper, $\mathbb{Z}$ denotes the set of
integers, $\Bbbk$ denotes a base field of characteristic zero unless otherwise stated, all (graded)
algebras are assumed to have an identity and all (graded) modules
are assumed to be unitary. We always take the grading to be $\mathbb{Z}$-graded.

\medskip
\section{Preliminaries}
In this section, we will recall some definitions and results of DG
Poisson algebras, universal enveloping algebras and DG Hopf
algebras.

\subsection{DG Poisson algebras} By a graded algebra $A$ we mean a
$\mathbb{Z}$-graded algebra $(A, u, \eta)$, where $u: A\otimes
A\rightarrow A$ and $\eta: \Bbbk\rightarrow A$ are called the
multiplication and unit of $A$, respectively. For convenience, we
shall write $u(a\otimes b)$ as $ab$, $\forall a, b\in A$, whenever
this does not cause confusion. A DG algebra is a graded algebra with
a $\Bbbk$-linear homogeneous map $d: A\rightarrow A$ of degree 1,
which is also a graded derivation. Let $A$, $B$ be two DG algebras
and $f: A\rightarrow B$ be a graded algebra map of degree zero. Then
$f$ is called a DG algebra map if $f$ commutes with the
differentials.

\begin{defn}
Let $(A, \cdot)$ be a graded $\Bbbk$-algebra. If there is a
$\Bbbk$-linear map
\begin{center}
$\{\cdot, \cdot\}: A\otimes A\rightarrow A$
\end{center}
of degree $p$ such that

\begin{enumerate}
\item[(\rmnum{1})] $(A, \{\cdot, \cdot\})$ is a graded Lie algebra. That is to
say, we have
\begin{enumerate}
\item[(\rmnum{1a})] $\{a, b\}=-(-1)^{(|a|+p)(|b|+p)}\{b, a\}$;

\item[(\rmnum{1b})] $\{a, \{b, c\}\}=\{\{a, b\}, c\}+(-1)^{(|a|+p)(|b|+p)}\{b, \{a, c\}\}$,
\end{enumerate}

\item[(\rmnum{2})] (graded commutativity): $a\cdot b=(-1)^{|a||b|}b\cdot a$;

\item[(\rmnum{3})] (biderivation property): $\{a, b\cdot c\}=\{a, b\}\cdot c+(-1)^{(|a|+p)|b|}b\cdot \{a, c\}$,
\end{enumerate}
for any homogeneous elements $a, b, c\in A$, then $A$ is called a
graded Poisson algebra \cite{CFL}. If in addition, there is a
$\Bbbk$-linear homogeneous map $d: A\rightarrow A$ of degree 1 such
that $d^2=0$ and

\begin{enumerate}
\item[(\rmnum{4})] $d(\{a, b\})=\{d(a), b\}+(-1)^{(|a|+p)}\{a, d(b)\}$;

\item[(\rmnum{5})] $d(a\cdot b)=d(a)\cdot b+(-1)^{|a|}a\cdot d(b)$,
\end{enumerate}
for any homogeneous elements $a, b\in A$, then $A$ is called a DG
Poisson algebra, which is usually denoted by $(A, \cdot, \{\cdot,
\cdot\}, d)$, or simply by $(A, \{\cdot, \cdot\}, d)$ or $A$ if no
confusions arise.
\end{defn}

\begin{lemma}\label{lem22}\cite{LWZ}
Let $(A, \cdot, \{\cdot, \cdot\}_A, d_A)$ and $(B, *, \{\cdot,
\cdot\}_B, d_B)$ be any two DG Poisson algebras with Poisson brackets of degree $p$. Then $(A\otimes B, \bigstar, \{\cdot, \cdot\}, d)$ is a DG Poisson algebra, where
\begin{align*}
(a\otimes b)\bigstar(a'\otimes b'):=(-1)^{|a'||b|}(a\cdot a')\otimes (b*b')&,\\
d(a\otimes b):=d_A(a)\otimes b+(-1)^{|a|}a\otimes d_B(b)&,\\
\{a\otimes b, a'\otimes b'\}:=(-1)^{(|a'|+p)|b|}\{a, a'\}_A\otimes
(b*b')+(-1)^{(|b|+p)|a'|}(a\cdot a')\otimes \{b, b'\}_B&,
\end{align*}
for any homogeneous elements $a, a'\in A$ and $b, b'\in B$.
\end{lemma}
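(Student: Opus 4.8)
The plan is to verify, one by one, the five defining conditions of a DG Poisson algebra for $(A\otimes B, \bigstar, \{\cdot,\cdot\}, d)$, arranging the computation so that the heaviest sign-chasing is confined to a small generating set. I would begin with the associative-algebra part. Here $(A\otimes B, \bigstar)$ is the Koszul-signed tensor product of the graded commutative algebras $(A, \cdot)$ and $(B, *)$: a routine manipulation of Koszul signs shows that $\bigstar$ is associative, that $1\otimes 1$ is a two-sided unit, and that $(a\otimes b)\bigstar(a'\otimes b') = (-1)^{(|a|+|b|)(|a'|+|b'|)}(a'\otimes b')\bigstar(a\otimes b)$, i.e. condition (\rmnum{2}). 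The map $d$ is the standard tensor-product differential, so $d^2 = 0$ and $d$ is a graded derivation of $\bigstar$ (condition (\rmnum{5})) by exactly the computation one performs for a tensor product of DG algebras. Finally, it is immediate from the defining formula that $\{\cdot,\cdot\}$ is $\Bbbk$-bilinear and that each of its two summands raises total degree by exactly $p$, so the bracket is homogeneous of degree $p$.

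Next I would record the two structural facts that organize the rest of the argument. First, $A\otimes 1$ and $1\otimes B$ are DG Poisson subalgebras of $A\otimes B$, and the canonical maps $A\to A\otimes 1$ and $B\to 1\otimes B$ are isomorphisms of DG Poisson algebras: in each of the three defining formulas the relevant signs become trivial and $\{1, 1\} = 0$ annihilates the unwanted summand, so the formulas restrict to the operations of $A$ and of $B$. Second, the ``cross brackets'' vanish: $\{a\otimes 1,\, 1\otimes b\} = 0$ for all homogeneous $a\in A$ and $b\in B$. This follows from the defining formula together with the identities $\{x, 1\} = 0 = \{1, x\}$, valid in any graded Poisson algebra (apply the biderivation property to $1 = 1\cdot 1$, and use graded antisymmetry), together with $d_A(1) = 0$. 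With these in hand I would verify graded antisymmetry (\rmnum{1a}) and the biderivation property (\rmnum{3}) by a direct computation on homogeneous elementary tensors; these are bilinear identities with no nested brackets, so the verification amounts to a finite collection of Koszul-sign identities, the signed analogues of the corresponding computations in the non-graded case.

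For the graded Jacobi identity (\rmnum{1b}) and the compatibility (\rmnum{4}) of $d$ with the bracket I would invoke the standard reduction principle: once (\rmnum{2}), (\rmnum{3}) and (\rmnum{5}) are available, the graded Jacobiator $(x, y, z)\mapsto \{x,\{y,z\}\} - \{\{x,y\},z\} - (-1)^{(|x|+p)(|y|+p)}\{y,\{x,z\}\}$ and the defect $(x, y)\mapsto d\{x,y\} - \{dx, y\} - (-1)^{|x|+p}\{x, dy\}$ are graded derivations in each of their arguments, hence each vanishes on all of $A\otimes B$ as soon as it vanishes when the arguments run over the algebra generators $A\otimes 1$ and $1\otimes B$. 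When all arguments lie in $A\otimes 1$ (respectively in $1\otimes B$) the identities reduce, through the subalgebra isomorphisms, to conditions (\rmnum{1b}) and (\rmnum{4}) for $A$ (respectively for $B$); in every mixed case at least one inner bracket is a cross bracket, and one checks that in fact each term of the Jacobiator, and each term of the defect, vanishes separately. This disposes of (\rmnum{1b}) and (\rmnum{4}) and completes the verification.

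The main obstacle is combinatorial rather than conceptual: keeping the Koszul signs straight in condition (\rmnum{3}) and in the elementary-tensor computations, where the sign $(-1)^{|a'||b|}$ built into $\bigstar$ must be commuted past the degree-$p$ shifts appearing in the two summands of $\{\cdot,\cdot\}$. I expect the cleanest bookkeeping to come from writing each homogeneous element of $A\otimes B$ as an honest tensor of homogeneous factors and transposing symbols one at a time according to the Koszul rule, rather than comparing fully assembled sign exponents. The reduction-to-generators step is precisely what keeps this burden away from the two conditions, (\rmnum{1b}) and (\rmnum{4}), where it would otherwise be most severe.
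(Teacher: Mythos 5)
This lemma is imported from \cite{LWZ} and the present paper gives no proof of it, so there is no in-paper argument to compare yours against; I can only assess your proposal on its own terms, and on those terms it is correct and complete in outline. Your organization --- verify the graded commutative DG algebra structure and the degree of the bracket directly, identify $A\otimes 1$ and $1\otimes B$ as DG Poisson subalgebras isomorphic to $A$ and $B$, observe that the cross brackets $\{a\otimes 1, 1\otimes b\}$ vanish, check graded antisymmetry and the biderivation property on elementary tensors, and then obtain the graded Jacobi identity and the compatibility of $d$ with the bracket by the reduction-to-generators principle --- is a standard and efficient way to carry out what is otherwise a brute-force sign computation (the route one would expect \cite{LWZ} to take is the fully direct verification on elementary tensors, so your use of the generators reduction is the one genuine economy). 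I spot-checked the two computations you defer to ``Koszul-sign identities'': antisymmetry and the biderivation identity do close up with the stated signs, e.g.\ in $\{a\otimes b, (a'\otimes b')\bigstar(a''\otimes b'')\}$ all four resulting terms match those of $\{a\otimes b, a'\otimes b'\}\bigstar(a''\otimes b'') + (-1)^{(|a|+|b|+p)(|a'|+|b'|)}(a'\otimes b')\bigstar\{a\otimes b, a''\otimes b''\}$. One small imprecision: for the Jacobiator to be a graded derivation in its \emph{first two} arguments (not just the last) you need graded antisymmetry of the new bracket in addition to the biderivation property; since you establish antisymmetry before invoking the reduction, this is a matter of listing the hypothesis rather than a gap. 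Also note that $\{x,1\}=0$ needs the biderivation property of the \emph{original} brackets on $A$ and $B$ (applied to $1=1\cdot 1$), which is available, so the vanishing of cross brackets is sound.
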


For DG Poisson algebras $A$ and $B$, a graded algebra homomorphism
$\phi:A\rightarrow B$ is said to be a DG Poisson algebra
homomorphism if $\phi\circ d_A=d_B\circ \phi$ and $\phi(\{a, b\}_A)=\{\phi(a), \phi(b)\}_B$ for all
homogeneous elements $a, b\in A$. We denote by \textbf{DG(P)A} the category of DG (Poisson) algebras
whose morphism space consists of DG (Poisson) algebras homomorphism.

For a DG algebra $B$, assume throughout the paper that, $B_P$ will
be the DG Poisson algebra $B$ with the standard graded Lie bracket
$[a, b]=ab-(-1)^{(|a|+p)(|b|+p)}ba$ for all homogeneous elements $a, b\in
B$, where $p$ is the degree of the Poisson bracket for $B$.

Let us review the definition of a universal enveloping algebra of
$A\in \textbf{DGPA}$.
\begin{defn}\label{defn2}
For a DG Poisson algebra A, a quadruple $(A^{e}, \alpha, \beta,
\partial)$, where $A^e\in \textbf{DGA}$, $\alpha: (A, d)\rightarrow
(A^{e}, \partial)$ is a DG algebra map and $\beta: (A, \{\cdot,
\cdot\}, d)\rightarrow (A^{e}_P, [\cdot, \cdot], \partial)$ is a DG
Lie algebra map such that
\begin{align*}
\alpha(\{a, b\})&=\beta(a)\alpha(b)-(-1)^{(|a|+p)|b|}\alpha(b)\beta(a),\\
\beta(ab)&=\alpha(a)\beta(b)+(-1)^{|a||b|}\alpha(b)\beta(a),
\end{align*}
for any homogeneous elements $a, b\in A$ and $p$ is the degree of the Poisson bracket for $A$, is called the universal
enveloping algebra of $A$ if $(A^{e}, \alpha, \beta, \partial)$
satisfies the following: if $(D, \delta)$ is a DG algebra, $f$ is a
DG algebra map from $(A, d)$ into $(D, \delta)$ and $g$ is a DG Lie
algebra map from $(A, \{\cdot, \cdot\}, d)$ into $(D_P, [\cdot,
\cdot], \delta)$ such that
\begin{align*}
f(\{a, b\})&=g(a)f(b)-(-1)^{(|a|+p)|b|}f(b)g(a),\\
g(ab)&=f(a)g(b)+(-1)^{|a||b|}f(b)g(a),
\end{align*}
for all $a, b\in A$, then there exists a unique DG algebra map
$\phi: (A^{e}, \partial)\rightarrow (D, \delta)$, such that
$\phi\alpha=f$ and $\phi\beta=g$.
\end{defn}

For every DG Poisson algebra $A$, there exists a unique universal
enveloping algebra $A^e$ up to isomorphic, that $A^e$ is generated
by $m(A)$ and $h(A)$ by \cite{LWZ} and that $h(1)=0$. Note that
$A^e$ is a DG algebra, such that a $\Bbbk$-vector space $M$ is a DG
Poisson $A$-module if and only if $M$ is a DG $A^e$-module.

As an application of the ``universal property" of universal
enveloping algebras, we have
\begin{lemma}\label{lem24} \cite{LWZ}
For any $A, B\in \textbf{DGPA}$, suppose that $(A^e, m_A, h_A)$ and $(B^e, m_B, h_B)$ are the universal enveloping algebra of $A$ and $B$, respectively. Then $(A^e\otimes B^e, m_A\otimes m_B, m_A\otimes
h_B+(-1)^{p|B|}h_A\otimes m_B)$ is the universal enveloping algebra for $A\otimes B$, where $p$ is the degree of the Poisson bracket for both $A$ and $B$. We use $(-1)^{p|B|}h_A\otimes m_B$ to mean that
\begin{center}
$((-1)^{p|B|}h_A\otimes m_B)(a\otimes b)=(-1)^{p|b|}h_A(a)\otimes m_B(b)$
\end{center}
for any $a\in A, b\in B$.
\end{lemma}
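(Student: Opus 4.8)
The plan is to exploit the uniqueness of the universal enveloping algebra and simply check that the triple
\[
(A^e\otimes B^e,\ M,\ H),\qquad M:=m_A\otimes m_B,\quad H:=m_A\otimes h_B+(-1)^{p|B|}h_A\otimes m_B,
\]
together with the tensor differential $\partial$ on $A^e\otimes B^e$, satisfies the universal property of Definition \ref{defn2} for $A\otimes B$. The first task is to confirm that this is an admissible enveloping datum: that $M$ is a DG algebra map, that $H$ is a DG Lie algebra map into $((A^e\otimes B^e)_P,[\cdot,\cdot],\partial)$, and that the pair $(M,H)$ obeys
\begin{align*}
M(\{x,y\})&=H(x)M(y)-(-1)^{(|x|+p)|y|}M(y)H(x),\\
H(xy)&=M(x)H(y)+(-1)^{|x||y|}M(y)H(x),
\end{align*}
for homogeneous $x,y\in A\otimes B$. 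I would verify these by substituting the tensor multiplication $\bigstar$ and the tensor bracket of Lemma \ref{lem22}, expanding via the two enveloping relations that $m_A,h_A$ and $m_B,h_B$ satisfy for $A^e$ and $B^e$, and matching the Koszul signs produced by the degree-$p$ maps $h_A,h_B$ against the explicit twist $(-1)^{p|B|}$; this step is routine but sign-sensitive.

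For the universal property, let $(D,\delta)$ be a DG algebra equipped with a DG algebra map $f$ and a DG Lie algebra map $g$ out of $A\otimes B$ satisfying the same two relations. I would restrict along the inclusions $A\cong A\otimes 1$ and $B\cong 1\otimes B$, setting $f_A(a)=f(a\otimes1)$, $g_A(a)=g(a\otimes1)$, $f_B(b)=f(1\otimes b)$ and $g_B(b)=g(1\otimes b)$. Because $\bigstar$ restricts to $\cdot$ and the tensor bracket of Lemma \ref{lem22} restricts to $\{\cdot,\cdot\}_A$ on $A\otimes1$ (the $B$-term dropping out since $\{1,1\}_B=0$), substituting $x,y\in A\otimes1$ into the relations for $f,g$ shows that $(f_A,g_A)$ satisfies the hypotheses of the universal property for $A$; symmetrically for $(f_B,g_B)$ and $B$. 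The universal properties of $A^e$ and $B^e$ then furnish unique DG algebra maps $\phi_A\colon A^e\to D$ and $\phi_B\colon B^e\to D$ with $\phi_A m_A=f_A$, $\phi_A h_A=g_A$, $\phi_B m_B=f_B$, $\phi_B h_B=g_B$.

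The candidate map is $\phi(x\otimes y)=\phi_A(x)\phi_B(y)$, and its uniqueness is automatic: evaluating the desired identities $\phi M=f$ and $\phi H=g$ on generators (using $h_A(1)=0=h_B(1)$) pins down $\phi(x\otimes1)=\phi_A(x)$ and $\phi(1\otimes y)=\phi_B(y)$, and $A^e\otimes B^e$ is generated by $A^e\otimes1$ and $1\otimes B^e$. For $\phi$ to be a DG algebra map the images of $\phi_A$ and $\phi_B$ must graded-commute, $\phi_B(y)\phi_A(x)=(-1)^{|x||y|}\phi_A(x)\phi_B(y)$, and by multiplicativity this need only be checked on the four generator pairs. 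Three of them fall out immediately from the vanishing of the cross brackets $\{a\otimes1,1\otimes b\}=\{1\otimes b,a\otimes1\}=0$: the first relation for $f,g$ forces the commutation of $g_A(a)$ with $f_B(b)$ and of $f_A(a)$ with $g_B(b)$, while $f$ being an algebra map together with $(1\otimes b)\bigstar(a\otimes1)=(-1)^{|a||b|}(a\otimes1)\bigstar(1\otimes b)$ handles the $f_A$-$f_B$ pair. Granting graded commutativity, $\phi$ is a well-defined algebra map, it commutes with $\delta$ because $\phi_A,\phi_B$ do, and the identities $\phi M=f$, $\phi H=g$ follow from the two relations for $f,g$, the check of $\phi H=g$ being exactly where the twist $(-1)^{p|B|}$ is consumed.

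The crux, and the step I expect to be the main obstacle, is the remaining generator pair: the graded commutation of $g_A(a)=g(a\otimes1)$ with $g_B(b)=g(1\otimes b)$. Applying $g$ to $\{a\otimes1,1\otimes b\}=0$ shows $[g(a\otimes1),g(1\otimes b)]=0$ in $D_P$, but because $g$ has degree $p$ the bracket sign reduces (using $2p\equiv0$) to $(-1)^{|a||b|}$, whereas the Koszul rule in $A^e\otimes B^e$ demands the sign $(-1)^{(|a|+p)(|b|+p)}$ attached to the generator degrees $|h_A(a)|=|a|+p$ and $|h_B(b)|=|b|+p$. Reconciling these two, so that $\phi$ is genuinely multiplicative on the $h$-$h$ generators, is precisely what forces the degree twist $(-1)^{p|B|}$ into $H$ (and, when $p$ is odd, a compensating sign into $\phi$); carrying this parity bookkeeping through consistently, so that it repairs the $g_A$-$g_B$ pair without disturbing the three pairs already settled, is the delicate heart of the proof. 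Everything else is formal.
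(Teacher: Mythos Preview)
The paper does not supply its own proof of this lemma: it is simply quoted from \cite{LWZ}, with no argument in the text. There is therefore nothing in the present paper to compare your proposal against.

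That said, your outline is the standard route and is essentially the argument carried out in \cite{LWZ}: verify that $(M,H)$ is an admissible enveloping pair for $A\otimes B$, then, given a test pair $(f,g)$ into $D$, restrict along $A\hookrightarrow A\otimes B\hookleftarrow B$ to obtain $(f_A,g_A)$ and $(f_B,g_B)$, invoke the universal properties of $A^e$ and $B^e$ separately, and combine the resulting maps $\phi_A,\phi_B$ into $\phi\colon A^e\otimes B^e\to D$. Your identification of the delicate point, namely the graded commutation of $\phi_A(h_A(a))$ with $\phi_B(h_B(b))$ and the role of the twist $(-1)^{p|b|}$ in absorbing the degree shift of $h$, is accurate; this is exactly the bookkeeping that the cited paper has to do. In the present paper the authors immediately specialise to $p=0$ after this section, where the issue disappears, which is presumably why they are content to cite the general statement rather than rederive it.
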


\begin{lemma}\label{lem25}
Let $(A^e, m_A, h_A)$ and $(B^e, m_B, h_B)$ be universal enveloping
algebras for DG Poisson algebras $A$ and $B$ respectively. Assume that $p$ is the degree of the Poisson bracket for both $A$ and $B$. If $\phi: A\rightarrow B$ is a DG Poisson homomorphism, then there
exists a unique DG algebra homomorphism $\phi^e: A^e\rightarrow B^e$
such that $\phi^em_A=m_B\phi$ and $\phi^eh_A=h_B\phi$.
\begin{align*}
\xymatrix{
  A \ar[d]_{m_A, ~h_A} \ar[rr]^{\phi}
             &   & B \ar[d]^{m_B, ~h_B}  \\
  A^{e} \ar[rr]_{\phi^e}
             &   & B^e             }
\end{align*}
\end{lemma}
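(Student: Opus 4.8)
The plan is to deduce the existence and uniqueness of $\phi^e$ directly from the universal property of $(A^e,m_A,h_A)$ stated in Definition \ref{defn2}, applied to the target DG algebra $(B^e,\partial_{B^e})$. First I would set $f:=m_B\circ\phi: A\to B^e$ and $g:=h_B\circ\phi: A\to B^e$. Since $\phi$ is a DG Poisson homomorphism it is in particular a DG algebra map $(A,d_A)\to(B,d_B)$ and a DG Lie algebra map $(A,\{\cdot,\cdot\}_A,d_A)\to(B,\{\cdot,\cdot\}_B,d_B)$; composing with the DG algebra map $m_B$ and with the DG Lie algebra map $h_B:(B,\{\cdot,\cdot\}_B,d_B)\to((B^e)_P,[\cdot,\cdot],\partial_{B^e})$ respectively shows that $f$ is a DG algebra map and that $g$ is a DG Lie algebra map into $(B^e)_P$. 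Here I use that $\phi$ has degree zero, so that $|\phi(a)|=|a|$ and no extra signs appear, together with the identities $\phi d_A=d_B\phi$, $m_Bd_B=\partial_{B^e}m_B$ and $h_Bd_B=\partial_{B^e}h_B$ to check compatibility with the differentials.

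The one computational point is to verify that the pair $(f,g)$ satisfies the two mixed relations required in Definition \ref{defn2}, namely
\begin{align*}
f(\{a,b\})&=g(a)f(b)-(-1)^{(|a|+p)|b|}f(b)g(a),\\
g(ab)&=f(a)g(b)+(-1)^{|a||b|}f(b)g(a),
\end{align*}
for homogeneous elements $a,b\in A$. For the first relation I would push $\phi$ through the bracket using $\phi(\{a,b\}_A)=\{\phi(a),\phi(b)\}_B$ and then apply the corresponding defining relation of $(B^e,m_B,h_B)$ to the homogeneous elements $\phi(a),\phi(b)\in B$; the second relation is handled the same way, using that $\phi$ is multiplicative. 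This is the only place where the degree-$p$ Koszul signs have to be tracked, but since $\phi$ preserves degrees the verification is entirely routine.

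With $(f,g)$ in hand, the universal property of $A^e$ produces a unique DG algebra map $\phi^e:(A^e,\partial_{A^e})\to(B^e,\partial_{B^e})$ with $\phi^em_A=f=m_B\phi$ and $\phi^eh_A=g=h_B\phi$, which is exactly the assertion of the lemma and makes the displayed diagram commute. Uniqueness is built into the universal property; alternatively it follows from the fact, recorded right after Definition \ref{defn2}, that $A^e$ is generated as a DG algebra by $m_A(A)$ and $h_A(A)$, so any DG algebra map out of $A^e$ is determined by its values on these generators. I expect no essential obstacle: the whole argument is a bookkeeping exercise with the universal property, the only care needed being the signs in the two compatibility identities.
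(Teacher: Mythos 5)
Your proposal is correct and follows essentially the same route as the paper: both set $f=m_B\phi$, $g=h_B\phi$, verify that this pair satisfies the algebra/Lie/mixed-relation and differential-compatibility hypotheses of the universal property of $A^e$ (using that $\phi$ is a DG Poisson homomorphism), and then invoke that universal property to obtain the unique $\phi^e$.
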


\begin{proof}
Note that $m_B$ is a DG algebra map and $h_B$ is a DG Lie algebra
map such that
\begin{align*}
m_B(\{b, b'\})&=h_B(b)m_B(b')-(-1)^{(|b|+p)|b'|}m_B(b')h_B(b),\\
h_B(bb')&=m_B(b)h_B(b')+(-1)^{|b||b'|}m_B(b')h_B(b).
\end{align*}
We have
\begin{align*}
m_B\phi(aa')&=m_B(\phi(a)\phi(a'))=m_B\phi(a)m_B\phi(a'),\\
h_B\phi(\{a, a'\})&=h_B(\{\phi(a), \phi(a')\})=[h_B\phi(a),
h_B\phi(a')],
\end{align*}
\begin{center}
$m_B\phi(\{a, a'\})=m_B(\{\phi(a),
\phi(a')\})=h_B\phi(a)m_B\phi(a')-(-1)^{(|a|+p)|a'|}m_B\phi(a')h_B\phi(a)$
\end{center}
and
\begin{center}
$h_B\phi(aa')=h_B(\phi(a)\phi(a'))=m_B\phi(a)h_B\phi(a')+(-1)^{|a||a'|}m_B\phi(a')h_B\phi(a)$,
\end{center}
since $\phi: A\rightarrow B$ is a DG Poisson homomorphism. Further,
it is easy to see
\begin{center}
$m_B\phi(d_A(a))=m_Bd_B(\phi(a))=d_{B^e}m_B\phi(a)$
\end{center}
and
\begin{center}
$h_B\phi(d_A(a))=h_Bd_B(\phi(a))=d_{B^e}h_B\phi(a)$.
\end{center}
Thus there exists a unique  DG algebra homomorphism $\phi^e$ from
$A^e$ into $B^e$ such that $\phi^em_A=m_B\phi$ and
$\phi^eh_A=h_B\phi$.
\end{proof}

\subsection{DG Hopf algebras}
In this subsection, we recall some definitions and properties of
graded Hopf algebras and DG Hopf algebras.

In the remainder of the paper, the twisting map $T: V\otimes
W\rightarrow W\otimes V$ will frequently appear. It is defined for
homogeneous elements $v\in V$ and $w\in W$ by
\begin{center}
$T(v\otimes w)=(-1)^{|v||w|}w\otimes v$
\end{center}
and extends to all elements of $V$ and $W$ through linearity.

A graded coalgebra $C$ over $\Bbbk$ is a $\mathbb{Z}$-graded vector
space with the graded vector space homomorphisms of degree 0 $\Delta: C\rightarrow C\otimes C$ and $\varepsilon: C\rightarrow
\Bbbk$ such that the following diagrams
\begin{eqnarray*}
 \xymatrix{
  C \ar[dd]_{\Delta} \ar[rr]^{\Delta}  & & C\otimes C \ar[dd]^{I\otimes \Delta}  &
   & C\otimes C   \ar[dl]_{I\otimes ~\varepsilon}  \ar[dr]^{\varepsilon\otimes ~I}  \\
     & &  & C\otimes \Bbbk
   &   & \Bbbk\otimes C  \\
  C\otimes C \ar[rr]^{\Delta\otimes I} &  & C\otimes C\otimes C  &   &
   C \ar[uu]_{\Delta}  \ar[ul]^{\cong} \ar[ur]_{\cong}}
\end{eqnarray*}
are commutative, where $I: C\rightarrow C$ is the identity homomorphism, $\Delta$ and
$\varepsilon$ are called the comultiplication and counit of $C$,
respectively. Note that the commutativity of the first diagram is
equivalent to $(\Delta\otimes I)\Delta=(I\otimes \Delta)\Delta$,
called the coassociativity of $\Delta$. Further, $C$ is graded cocommutative if ~$\Delta=T\Delta$, where $T: C\otimes C\rightarrow C\otimes C$ is the twisting morphism.

For any $c\in C$, we write $\Delta(c)=c_{(1)}\otimes c_{(2)}$, which
is basically the Sweedler's notation with the summation sign
$\Sigma$ omitted. In this notation, the comultiplication and counit
property may be expressed as
\begin{align*}
(\Delta\otimes I)\Delta(c)&=(I\otimes
\Delta)\Delta(c)=c_{(1)}\otimes c_{(2)}\otimes c_{(3)},\\
c&=\varepsilon(c_{(1)})c_{(2)}=c_{(1)}\varepsilon(c_{(2)}),
\end{align*}
for any homogeneous elements $c\in C$, respectively.

A homomorphism of graded coalgebras $f: A\rightarrow B$ is a
homomorphism of graded vector spaces such that the diagrams
\begin{align*}
 \xymatrix{
  A \ar[dd]_{f} \ar[rr]^{\Delta_A}  & & A\otimes A \ar[dd]^{f\otimes f}  &
   & & & A  \ar[dl]_{\varepsilon_A}   \ar[dd]_{f}  \\
     & & & & &\Bbbk
   &   &  \\
  B \ar[rr]^{\Delta_B} &  & B\otimes B  &
  &  & & B  \ar[ul]^{\varepsilon_B}   }
\end{align*}
are commutative.

Let $H$ be a graded algebra with multiplication $u$ and unit $\eta$,
and at the same time a graded coalgebra with comultiplication
$\Delta$ and counit $\varepsilon$. Then $H$ is called a graded
bialgebra provided that one of the following equivalent conditions
are satisfied:
\begin{enumerate}
  \item[(\rmnum{1})] $\Delta$ and $\varepsilon$ are graded algebra homomorphisms;
  \item[(\rmnum{2})] $u$ and $\eta$ are graded coalgebra homomorphisms.
\end{enumerate}
Further, if $H$ admits a graded vector space homomorphism
$S: H\rightarrow H$ of degree 0, which satisfies the following
defining relation
\begin{center}
$u(I\otimes S)\Delta=u(S\otimes I)\Delta=\eta\varepsilon$,
\end{center}
then $H$ is called a graded Hopf algebra, and $S$ is called the
antipode of $H$.

The antipode $S$ has the following properties \cite{GZB, MM, S}.
\begin{lemma}\label{lem26}
Let $H$ be a graded Hopf algebra and $S$ its antipode; then
\begin{enumerate}
  \item $S\cdot u=u\cdot T\cdot (S\otimes S)$,
  \item $S\cdot \eta=\eta$,
  \item $\varepsilon\cdot S=\varepsilon$,
  \item $T\cdot (S\otimes S)\cdot \Delta=\Delta\cdot S$,
  \item if $H$ is graded commutative or graded cocommutative, then $S\cdot S=I$,
\end{enumerate}
where $I: H\rightarrow H$ is the identity morphism and $T: H\otimes
H\rightarrow H\otimes H$ is the twisting morphism.
\end{lemma}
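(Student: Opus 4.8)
The plan is to derive all five identities from the single structural fact that the antipode is the convolution inverse of the identity. For a graded coalgebra $C$ and a graded algebra $B$, the space $\mathrm{Hom}_{\Bbbk}(C,B)$ of graded linear maps is an associative graded algebra under the convolution product $f\ast g=u_B\circ(f\otimes g)\circ\Delta_C$, with two-sided unit $\eta_B\circ\varepsilon_C$; here applying $f\otimes g$ to $\Delta_C(c)=c_{(1)}\otimes c_{(2)}$ produces the Koszul sign $(-1)^{|g||c_{(1)}|}$. Taking $C=B=H$, the defining relation $u(I\otimes S)\Delta=u(S\otimes I)\Delta=\eta\varepsilon$ says exactly that $S$ is the two-sided convolution inverse of $I$ in $\mathrm{Hom}_{\Bbbk}(H,H)$. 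Since inverses in a monoid are unique, each assertion will be reduced to exhibiting a suitable map as a one- or two-sided convolution inverse of a known element.

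Identities (2) and (3) record the compatibility of $S$ with the unit and counit and follow by direct evaluation. For (2), apply $u(S\otimes I)\Delta=\eta\varepsilon$ to the element $1$: since $\Delta(1)=1\otimes 1$ and $\varepsilon(1)=1$, the relation collapses to $S(1)=1$, i.e. $S\eta=\eta$. For (3), post-compose $u(I\otimes S)\Delta=\eta\varepsilon$ with $\varepsilon$; using that $\varepsilon$ is a graded algebra map (so $\varepsilon u=\varepsilon\otimes\varepsilon$), that $\varepsilon\eta=\mathrm{id}_{\Bbbk}$, and the counit axiom, the left-hand side simplifies to $\varepsilon\circ S$ and the right-hand side to $\varepsilon$.

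For the anti-multiplicativity (1) I would work in the convolution algebra $\mathrm{Hom}_{\Bbbk}(H\otimes H,H)$, where $H\otimes H$ carries its tensor-product coalgebra structure (whose comultiplication involves the twist $T$ and the corresponding sign). Following Sweedler's classical argument, I would show that $S\circ u$ is a left convolution inverse and $u\circ T\circ(S\otimes S)$ is a right convolution inverse of the multiplication $u\colon H\otimes H\to H$; associativity of convolution then forces $S\circ u=u\circ T\circ(S\otimes S)$. The left-inverse claim follows from the formula $(fu)\ast(gu)=(f\ast g)u$, valid because $\Delta$ is a graded algebra map, applied with $f=S$, $g=I$; the right-inverse claim is the telescoping $a_{(1)}b_{(1)}S(b_{(2)})S(a_{(2)})=\varepsilon(b)\,a_{(1)}S(a_{(2)})=\varepsilon(a)\varepsilon(b)1$. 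Property (4) is the formal dual: in $\mathrm{Hom}_{\Bbbk}(H,H\otimes H)$ one uses $(\Delta f)\ast(\Delta g)=\Delta(f\ast g)$ to see that $\Delta\circ S$ is a right convolution inverse of $\Delta$, and a dual telescoping to see that $T\circ(S\otimes S)\circ\Delta$ is a left convolution inverse, whence the two coincide.

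Finally, (5) follows once (1) and (4) are available; in both cases the goal is to show $S\ast S^2=\eta\varepsilon$, which makes $S^2$ a convolution inverse of $S$ and hence equal to $I$. If $H$ is graded commutative then $uT=u$, so (1) upgrades $S$ to a genuine graded algebra map, and applying $S$ to $h_{(1)}S(h_{(2)})=\varepsilon(h)1$ gives $S(h_{(1)})S^2(h_{(2)})=\varepsilon(h)1$. If $H$ is graded cocommutative then $\Delta=T\Delta$, and applying $S$ to $S(h_{(1)})h_{(2)}=\varepsilon(h)1$ together with (1) and cocommutativity again yields $S(h_{(1)})S^2(h_{(2)})=\varepsilon(h)1$. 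The one genuinely delicate point throughout is the consistent bookkeeping of Koszul signs---in the (co)algebra structures on $H\otimes H$, in the twist $T$, and in the telescoping identities; because $S$ has degree $0$ it contributes no signs itself, so I expect this sign tracking, rather than any conceptual difficulty, to be the principal labor.
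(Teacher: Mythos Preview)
The paper does not actually prove this lemma; it is stated with citations to \cite{GZB, MM, S} and no argument is given. Your proposal is the classical convolution-algebra proof due to Sweedler, transported to the graded setting, and it is correct in outline: parts (2) and (3) by direct evaluation, parts (1) and (4) by the uniqueness-of-inverses trick in $\mathrm{Hom}_{\Bbbk}(H\otimes H,H)$ and $\mathrm{Hom}_{\Bbbk}(H,H\otimes H)$ respectively, and part (5) by showing $S^2$ is a convolution inverse of $S$. This is precisely the approach one finds in the cited references, so there is no substantive divergence to discuss; your remark that the only genuine labor is the Koszul sign bookkeeping is accurate.
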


\begin{defn} \cite{U}
Let $H$ be a graded $\Bbbk$-vector space. If there is a
$\Bbbk$-linear homogeneous map $d: H\rightarrow H$ of degree 1 such
that $d^2=0$ and
\begin{enumerate}
\item[(\rmnum{1})] $(H, u, \eta, d)$ is a DG algebra. That
is to say, we have
\begin{enumerate}
\item[(\rmnum{1a})] $(H, u, \eta)$ is a graded algebra;

\item[(\rmnum{1b})] $d$ is a (algebra) derivation of degree 1, that
means,
\begin{center}
$d(ab)=d(a)b+(-1)^{|a|}ad(b)$,
\end{center}
for any homogeneous elements
$a, b\in H$.
\end{enumerate}

\item[(\rmnum{2})] $(H, \Delta, \varepsilon, d)$ is a DG coalgebra. That
is, we have
\begin{enumerate}
\item[(\rmnum{2a})] $(H, \Delta, \varepsilon)$ is a graded
coalgebra;

\item[(\rmnum{2b})] $d$ is a coderivation of degree 1, that
means, $\varepsilon d=0$ and
\begin{center}
$\Delta d=(d\otimes I+T(d\otimes I)T)\Delta.$
\end{center}
\end{enumerate}

\item[(\rmnum{3})] $(H, u, \eta, \Delta, \varepsilon)$ is a graded
 bialgebra.
\end{enumerate}
Then $H$ is called a DG bialgebra. Moreover, if $(H, u, \eta, \Delta, \varepsilon, S)$ is a
graded Hopf algebra, then $H$ is called a DG Hopf algebra, which is
usually denoted by $(H, u, \eta, \Delta, \varepsilon, S, d)$.
\end{defn}

\medskip
\section{DG Poisson Hopf algebras and universal enveloping algebras}
Now we will introduce the notion of DG Poisson Hopf algebras and
prove that the universal enveloping algebra $A^e$ of a DG Poisson
bialgebra $A$ is a DG bialgebra.

\begin{defn} \label{defn3.1}
Let $A$ be a graded $\Bbbk$-vector space. If there is a
$\Bbbk$-linear map
\begin{center}
$\{\cdot, \cdot\}: A\otimes A\rightarrow A$
\end{center}
of degree $p$ such that:

\begin{enumerate}
\item[(\rmnum{1})] $(A, u, \eta, \{\cdot, \cdot\})$ is a graded Poisson algebra;

\item[(\rmnum{2})] $(A, u, \eta, \Delta, \varepsilon)$ is a graded bialgebra;

\item[(\rmnum{3})] $\Delta(\{a, b\}_A)=\{\Delta(a), \Delta(b)\}_{A\otimes A}$ for all $a, b\in A$,
where the Poisson bracket $\{\cdot, \cdot\}_{A\otimes A}$ on
$A\otimes A$ is defined by
\begin{center}
$\{a\otimes a', b\otimes b'\}_{A\otimes A}=(-1)^{(|b|+p)|a'|}\{a,
b\}\otimes a'b'+(-1)^{(|a'|+p)|b|}ab\otimes \{a', b'\}$
\end{center}
for any homogeneous elements $a, b, a', b'\in A$.
\end{enumerate}
Then $A$ is called a graded Poisson bialgebra. If in addition, there
is a $\Bbbk$-linear homogeneous map $d: A\rightarrow A$ of degree 1
such that $d^2=0$ and

\begin{enumerate}
\item[(\rmnum{4a})] $d(\{a, b\})=\{d(a), b\}+(-1)^{(|a|+p)}\{a, d(b)\}$;

\item[(\rmnum{4b})] $d(a\cdot b)=d(a)\cdot b+(-1)^{|a|}a\cdot d(b)$;

\item[(\rmnum{4c})] $\varepsilon d=0$ and $\Delta d(a)=d(a_{(1)})\otimes a_{(2)}
+(-1)^{|a_{(1)}|}a_{(1)}\otimes d(a_{(2)})$, where
$\Delta(a)=a_{(1)}\otimes a_{(2)}$,
\end{enumerate}
for any homogeneous elements $a, b\in A$, then $(A, u, \eta, \Delta,
\varepsilon, \{\cdot, \cdot\}, d)$ is called a DG Poisson bialgebra.
Further, if $A$ admits a graded vector space homomorphism
$S: A\rightarrow A$ of degree 0 which satisfies the following
defining relation
\begin{center}
$u(I\otimes S)\Delta=u(S\otimes I)\Delta=\eta\varepsilon$.
\end{center}
Then $A$ is called a DG Poisson Hopf algebra,  which is usually
denoted by
\begin{center}
$(A, u, \eta, \Delta, \varepsilon, S, \{\cdot, \cdot\},
d)$.
\end{center}
\end{defn}

From now on, we would like to use the degree of the Poisson bracket
is zero in our definition of DG Poisson Hopf algebra, but the result
obtained in this paper are also true for DG Poisson Hopf algebra of
degree $p$ with some expected signs, where $p\in \mathbb{Z}$ is the
degree of the Poisson bracket.

As a generalization of Lemma \ref{lem22}, we have
\begin{prop}
Let $(A, u_A, \eta_A, \Delta_A, \varepsilon_A, S_A, \{\cdot, \cdot\}_A,
d_A)$ and $(B, u_B, \eta_B, \Delta_B, \varepsilon_B, S_B, \{\cdot, \cdot\}_B,
d_B)$ be any two DG Poisson Hopf algebras with Poisson brackets of degree $0$. Then \[(A\otimes B, u, \eta, \Delta, \varepsilon, S, \{\cdot, \cdot\}, d)\] is a DG Poisson Hopf algebra, where
\begin{align*}
&u((a\otimes b)\otimes (a'\otimes b')):=(-1)^{|a'||b|}aa'\otimes bb', \quad \ \ \ \quad \quad \ \eta(1_{\Bbbk}):=1_A\otimes 1_B ,\\
&\Delta(a\otimes b):=(-1)^{|a_{(2)}||b_{(1)}|}a_{(1)}\otimes b_{(1)}\otimes a_{(2)}\otimes b_{(2)}, \ \ \ \quad \quad \epsilon(a\otimes b):=\epsilon_{A}(a)\epsilon_{B}(b),\\
&d(a\otimes b):=d_A(a)\otimes b+(-1)^{|a|}a\otimes d_B(b), \quad \quad \quad \quad \quad S(a\otimes b):=S_{A}(a)\otimes S_{B}(b),\\
&\{a\otimes b, a'\otimes b'\}:=(-1)^{|a'||b|}(\{a, a'\}_A\otimes
bb'+aa'\otimes \{b, b'\}_B),
\end{align*}
for any homogeneous elements $a, a'\in A$ and $b, b'\in B$. As usual, we write $\Delta(a)=a_{(1)}\otimes a_{(2)}$ and $\Delta(b)=b_{(1)}\otimes b_{(2)}$.
\end{prop}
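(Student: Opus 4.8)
The plan is to verify the six structural requirements in Definition~\ref{defn3.1} for the tensor object $A\otimes B$, organizing the work so that the already-established results do most of the lifting. First I would observe that the Poisson bracket on $A\otimes B$ written here coincides (when $p=0$) with the bracket from Lemma~\ref{lem22}, and the multiplication, unit, and differential are likewise the standard ones; hence item (\rmnum{1}), that $(A\otimes B, u, \eta, \{\cdot,\cdot\})$ is a graded Poisson algebra, together with the DG compatibility conditions (\rmnum{4a}) and (\rmnum{4b}), follows immediately from Lemma~\ref{lem22}. Similarly, that $(A\otimes B, u, \eta, \Delta, \varepsilon)$ is a graded bialgebra is the classical fact that a tensor product of (graded) bialgebras is a (graded) bialgebra, where $\Delta$ is the composite $(I\otimes T\otimes I)(\Delta_A\otimes\Delta_B)$ — precisely the sign-twisted formula displayed; this gives item (\rmnum{2}). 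That $S=S_A\otimes S_B$ is an antipode for this bialgebra is again classical and uses Lemma~\ref{lem26}, so the final antipode axiom $u(I\otimes S)\Delta = u(S\otimes I)\Delta = \eta\varepsilon$ is routine once (\rmnum{2}) is in place.

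The content that is genuinely new, and which I would treat as the heart of the proof, is the compatibility condition (\rmnum{3}): that $\Delta$ is a Poisson map, i.e. $\Delta(\{x,y\}) = \{\Delta(x), \Delta(y)\}_{(A\otimes B)\otimes(A\otimes B)}$ for $x,y\in A\otimes B$. By bilinearity it suffices to check this on elementary tensors $x = a\otimes b$ and $y = a'\otimes b'$. I would expand the left-hand side by first applying the bracket formula, then applying $\Delta_A\otimes\Delta_B$ (with the interchange sign) to each of the two terms $\{a,a'\}_A\otimes bb'$ and $aa'\otimes\{b,b'\}_B$, using that $\Delta_A$ is a Poisson map on $A$ and $\Delta_B$ is one on $B$ (axiom (\rmnum{3}) for $A$ and $B$) together with the fact that $\Delta_A,\Delta_B$ are algebra maps. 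For the right-hand side I would expand $\Delta(a\otimes b)$ and $\Delta(a'\otimes b')$ via the displayed coproduct, then apply the bracket on the four-fold tensor product — which is itself the bracket built by the same tensor-product recipe — and collect terms. The two sides should match term by term after reindexing the Sweedler components.

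The main obstacle will be bookkeeping of the Koszul signs: one must track the sign $(-1)^{|a'||b|}$ from the multiplication on $A\otimes B$, the interchange sign $(-1)^{|a_{(2)}||b_{(1)}|}$ hidden in $\Delta$, and the signs generated when the bracket (of degree $0$, which simplifies matters) is applied to shuffled tensor factors, all while the summation convention suppresses the Sweedler indices. My strategy to contain this is to do the computation with $p=0$ throughout (as the remark preceding the proposition licenses), to name the four tensor legs explicitly before permuting them, and to compare the coefficient of each monomial $a_{(i)}a'_{(j)}\otimes b_{(k)}b'_{(\ell)}$-type term on both sides rather than trying to manipulate the whole expression at once. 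The associativity, cocommutativity-independent nature of the argument means no deep idea is needed beyond Lemma~\ref{lem22}, the bialgebra/Hopf tensor-product facts, and careful sign discipline; so I would present items (\rmnum{1}), (\rmnum{2}), (\rmnum{4a})--(\rmnum{4c}) and the antipode axiom briskly by citation, and devote the displayed computation to verifying (\rmnum{3}).
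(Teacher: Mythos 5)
Your plan is correct and takes essentially the same route as the paper, whose entire proof is the single sentence that the verification is routine; your decomposition into Lemma~\ref{lem22} for the DG Poisson structure, the classical tensor-product bialgebra and antipode facts, and an explicit Koszul-sign check of the compatibility condition (\rmnum{3}) is exactly the routine verification the paper leaves to the reader.
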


\begin{proof}
It is verified routinely that $(A\otimes B, u, \eta, \Delta, \varepsilon, S, \{\cdot, \cdot\}, d)$ is a DG Poisson Hopf algebra.
\end{proof}

Now we give some examples of DG Poisson Hopf algebras.

\begin{exa}  \label{exa30}
Let $(A, u, \eta, \Delta, \varepsilon, S, \{\cdot, \cdot\}, d)$ be any DG Poisson Hopf algebra with Poisson brackets of degree $0$. Then \[(A^{op}, u^{op}, \eta, \Delta^{op}, \varepsilon, S, \{\cdot, \cdot\}^{op}, d)\] is also a DG Poisson Hopf algebra, where
\begin{align*}
u^{op}(a\otimes b)=(-1)^{|a||b|}b\cdot a=a\cdot b=u(a\otimes b),&\\
\Delta^{op}=T\Delta,&\\
\{a, b\}^{op}=(-1)^{|a||b|}\{b, a\}=-\{a, b\},&
\end{align*}
for any homogeneous elements $a, b\in A$, and $T: A\otimes A\rightarrow A\otimes A$ is the twisting
morphism.
\end{exa}

From the constructions of opposite algebra and tensor product of DG Poisson Hopf algebras, we
have the following observation:
\begin{prop}
Let $A$ and $B$ be any two DG Poisson Hopf algebras. Then \[(A\otimes B)^{op}=A^{op}\otimes B^{op}.\]
\end{prop}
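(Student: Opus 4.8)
The plan is to prove the identity $(A\otimes B)^{op}=A^{op}\otimes B^{op}$ as DG Poisson Hopf algebras by comparing all the structure maps on both sides, one operation at a time, using the explicit formulas recorded in the tensor-product proposition and in Example~\ref{exa30}. Since both sides have the same underlying graded $\Bbbk$-vector space $A\otimes B$, what must be checked is that the multiplications, units, comultiplications, counits, antipodes, Poisson brackets, and differentials agree. The differential, the unit, the counit, and the antipode are the easy cases: taking the opposite of an algebra (or coalgebra) does not alter $d$, $\eta$, $\varepsilon$, or $S$, and the tensor-product formulas for these four maps are manifestly symmetric in the roles of ``multiplication'' versus ``opposite multiplication'', so equality is immediate.

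The substantive checks are for the multiplication, the comultiplication, and the Poisson bracket, and in each case the point is to verify that a Koszul sign works out correctly. First I would expand the multiplication on $(A\otimes B)^{op}$: by Example~\ref{exa30} it is $u_{A\otimes B}^{op}\bigl((a\otimes b)\otimes(a'\otimes b')\bigr)=(-1)^{(|a|+|b|)(|a'|+|b'|)}u_{A\otimes B}\bigl((a'\otimes b')\otimes(a\otimes b)\bigr)$, then substitute the tensor formula $u_{A\otimes B}\bigl((a'\otimes b')\otimes(a\otimes b)\bigr)=(-1)^{|a||b'|}a'a'\!\cdot\!a\otimes b'b$ --- more precisely $(-1)^{|a||b'|}(a'\cdot a)\otimes(b'\cdot b)$ --- and compare with $u_{A^{op}\otimes B^{op}}\bigl((a\otimes b)\otimes(a'\otimes b')\bigr)=(-1)^{|a'||b|}(a\cdot_{op}a')\otimes(b\cdot_{op}b')$, rewriting each opposite product back in terms of the original product. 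The surviving sign on both sides should collapse, after using graded commutativity of $A$ and $B$, to the same power of $-1$; this is the prototype computation and the one I expect to be the main obstacle, since getting the exponent bookkeeping exactly right (keeping track of $|a|,|a'|,|b|,|b'|$ and cross terms) is where an error would hide.

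The comultiplication case is analogous: on $(A\otimes B)^{op}$ the coproduct is $T\circ\Delta_{A\otimes B}$, and I would expand $\Delta_{A\otimes B}(a\otimes b)=(-1)^{|a_{(2)}||b_{(1)}|}a_{(1)}\otimes b_{(1)}\otimes a_{(2)}\otimes b_{(2)}$, apply the twist $T$ on $(A\otimes B)\otimes(A\otimes B)$, and compare with the coproduct of $A^{op}\otimes B^{op}$, namely the tensor-product formula applied to $\Delta_A^{op}=T_A\Delta_A$ and $\Delta_B^{op}=T_B\Delta_B$; again everything reduces to a Koszul-sign identity among the degrees of $a_{(1)},a_{(2)},b_{(1)},b_{(2)}$. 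For the Poisson bracket I would do the same with $\{\cdot,\cdot\}^{op}=-\{\cdot,\cdot\}$ on $A\otimes B$ versus the bracket built from $\{\cdot,\cdot\}_A^{op}$ and $\{\cdot,\cdot\}_B^{op}$; since degree $p=0$ has been fixed, the sign arithmetic here is lighter. Having matched all seven families of structure maps, the two DG Poisson Hopf algebras coincide, which is the assertion. I would close by remarking that all of these verifications are routine Koszul-sign manipulations using graded (co)commutativity of $A$ and $B$, and omit the full expansions.
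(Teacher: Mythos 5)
Your proposal is correct and coincides with what the paper intends: the paper states this proposition as an observation ``from the constructions'' with no written proof, and the only content is exactly the componentwise comparison of the structure maps you describe, where graded commutativity gives $u^{op}=u$, antisymmetry gives $\{\cdot,\cdot\}^{op}=-\{\cdot,\cdot\}$, and the comultiplication check reduces to cancelling the cross-term $|a_{(2)}||b_{(1)}|$ against the twist sign. Your sign bookkeeping plan does close up (the surviving exponent on both sides is $|a_{(1)}||a_{(2)}|+|a_{(1)}||b_{(2)}|+|b_{(1)}||b_{(2)}|$ for the coproduct), so there is no gap.
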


In \cite{LWZ1}, there are a lot of examples about Poisson Hopf algebras, which can be considered as DG Poisson Hopf algebras concentrated in degree 0 with trivial differential. For instance, we have the following example.
\begin{exa} \label{exa31}
Assume that the base field $\Bbbk$ has characteristic $p>2$. Let
\begin{center}
$B=\Bbbk[x, y, z]/(x^p, y^p, z^p)$
\end{center}
be the restricted symmetric algebra with three variables. Then $B$ becomes a Poisson Hopf algebra via
\begin{align*}
&\Delta(x)=x\otimes 1+1\otimes x,\\
&\Delta(y)=y\otimes 1+1\otimes y,\\
&\Delta(z)=z\otimes 1+1\otimes z-2x\otimes y,\\
&S(x)=-x, \quad S(y)=-y, \quad S(z)=-z-2xy,\\
&\{x, y\}=y, \quad \{y, z\}=y^2, \quad \{x, z\}=z.
\end{align*}
\end{exa}
In the above example, one should view it as a Poisson version of the Hopf algebra in \cite{NWW}. Similarly, we can obtain other Poisson Hopf algebras from connected Hopf algebras in \cite{NWW}.

\begin{exa} \label{exa32}
Let $(L, d_L)$ be a finite dimensional DG Lie algebra over $\Bbbk$
with standard graded Lie bracket $[\cdot, \cdot]$ and let $(S(L), u,
\eta)$ be the graded symmetric algebra of $L$, i.e.,

$$S(L):=\frac{T(L)}{(a\otimes b-(-1)^{|a||b|}b\otimes a)},$$

for any $a, b\in L$, where $u$ is a multiplication and $\eta$ is a
unit. The differential $d_L$ of $L$ can be extended to the graded
symmetric algebra $S(L)$ such that $S(L)$ becomes a graded
commutative DG algebra and denote by $d$ the differential in $S(L)$.
Here, we consider the total grading on $S(L)$ coming from the
grading of $L$. Moreover, the Lie bracket on $L$ also can be
extended to a Poisson bracket on $S(L)$ by graded Jacobi identity
such that
\begin{center}
$\{a, b\}_{S(L)}:=[a, b],$
\end{center}
for any $a, b\in L$. Hence, the graded symmetric algebra $S(L)$ over
$L$ has a natural DG Poisson algebra structure.

Now, we introduce the graded Hopf structure for $S(L)$.  Denote by $\Delta$
the familiar comultiplication $\Delta: S(L)\rightarrow S(L)\otimes
S(L)$ such that
\begin{align*}
\Delta(a)&=a\otimes 1+1\otimes a, ~~~~~~~~~~\ \forall a\in L,\\
\Delta(uv)&=\Delta(u)\Delta(v), ~~~~~~~~\qquad \forall u, v\in S(L)
\end{align*}
and let the morphism $\varepsilon: S(L)\rightarrow \Bbbk$ be defined
by
\begin{align*}
\varepsilon(1)&=1,\\
\varepsilon(a)&=0, ~~~~~\qquad \qquad \ \ \ \ \forall a\in L,\\
\varepsilon(uv)&=\varepsilon(u)\varepsilon(v), ~~~~~\qquad \ \forall
u, v\in S(L),
\end{align*}
then $S(L)$ constitutes a $\mathbb{Z}$-graded bialgebra with
comultiplication $\Delta$ and counit $\varepsilon$. Fix a
$\Bbbk$-basis $x_1, x_2,..., x_n$ of $L$. Note that $S(L)$ is the
graded commutative polynomial ring $k[x_1, x_2,..., x_n]$, we can
prove the formula $\Delta(\{a, b\})=\{\Delta(a), \Delta(b)\}$ by
using the induction on degree of homogeneous elements of $S(L)$. In
fact, we have
\begin{align*}
&\Delta(\{x_i, x_j\}_{S(L)})\\=&\Delta(x_ix_j-(-1)^{|x_i||x_j|}x_jx_i)\\
=&\Delta(x_i)\Delta(x_j)-(-1)^{|x_i||x_j|}\Delta(x_j)\Delta(x_i)\\
=&(x_i\otimes 1+1\otimes x_i)(x_j\otimes 1+1\otimes x_j)-(-1)^{|x_i||x_j|}(x_j\otimes 1+1\otimes x_j)(x_i\otimes 1+1\otimes x_i)\\
=&x_ix_j\otimes 1+x_i\otimes x_j+(-1)^{|x_i||x_j|}(x_j\otimes x_i)+1\otimes x_ix_j\\&-(-1)^{|x_i||x_j|}(x_jx_i\otimes 1+x_j\otimes x_i+(-1)^{|x_i||x_j|}(x_i\otimes x_j)+1\otimes x_jx_i)\\
=&x_ix_j\otimes 1+1\otimes x_ix_j-(-1)^{|x_i||x_j|}(x_jx_i\otimes
1)-(-1)^{|x_i||x_j|}(1\otimes x_jx_i)
\end{align*}
and
\begin{align*}
&\{\Delta(x_i), \Delta(x_j)\}\\
=&\{x_i\otimes 1+1\otimes x_i, x_j\otimes 1+1\otimes x_j\}\\
=&\{x_i, x_j\}\otimes 1+1\otimes \{x_i, x_j\}\\
=&x_ix_j\otimes 1-(-1)^{|x_i||x_j|}(x_jx_i\otimes 1)+1\otimes
x_ix_j-(-1)^{|x_i||x_j|}(1\otimes x_jx_i),
\end{align*}
and so we have $\Delta(\{x_i, x_j\})=\{\Delta(x_i), \Delta(x_j)\}.$ For any
monomials $a, b, c\in S(L)$, assume that $\Delta(\{a,
c\})=\{\Delta(a), \Delta(c)\}$ and $\Delta(\{b, c\})=\{\Delta(b),
\Delta(c)\}$. Now we have
\begin{align*}
\Delta(\{ab, c\})&=\Delta(a\{b, c\}+(-1)^{|a||b|}b\{a, c\})\\
&=\Delta(a)\Delta(\{b, c\})+(-1)^{|a||b|}\Delta(b)\Delta(\{a, c\})\\
&=\Delta(a)\{\Delta(b), \Delta(c)\}+(-1)^{|a||b|}\Delta(b)\{\Delta(a), \Delta(c)\}\\
&=\{\Delta(ab), \Delta(c)\}.
\end{align*}
Applying the induction and biderivation property, we have
\begin{center}
$\Delta(\{a, b\})=\{\Delta(a), \Delta(b)\}$,
\end{center}
for all $a, b\in S(L)$. In fact, it is also verified easily using
the induction on degree of homogeneous elements of $S(L)$ that
$\Delta d(a)=d(a_{(1)})\otimes a_{(2)} +(-1)^{|a_{(1)}|}a_{(1)}\otimes
d(a_{(2)})$, $\varepsilon d(a)=0$ and $\varepsilon(\{a, b\})=0$ for all $a, b\in S(L)$, where $\Delta(a)=a_{(1)}\otimes
a_{(2)}$. Hence $(S(L), u, \eta, \Delta, \varepsilon, \{\cdot,
\cdot\}, d)$ is a DG Poisson bialgebra. In order to finish the
proof, it suffice to prove that
\begin{equation*}
u(S\otimes I)\Delta=u(I\otimes S)\Delta=\eta\varepsilon,
\end{equation*}
where the $\Bbbk$-linear homogeneous map $S: S(L)\rightarrow S(L)$ of
degree 0 is defined by
\begin{align*}
S(1)&=1,\\
S(a)&=-a, ~~~~\qquad\forall a\in L,\\
S(uv)&=(-1)^{|u||v|}S(v)S(u)
\end{align*}
for homogeneous elements of $S(L)$ which extends to all $S(L)$ by
linearity. Then we have
\begin{center}
$u(S\otimes I)\Delta(x_i)=u(S\otimes I)(x_i\otimes 1+1\otimes
x_i)=S(x_i)+x_i=-x_i+x_i=0=\eta\varepsilon(x_i)$
\end{center}
and
\begin{center}
$u(I\otimes S)\Delta(x_i)=u(I\otimes S)(x_i\otimes 1+1\otimes
x_i)=x_i+S(x_i)=x_i-x_i=0=\eta\varepsilon(x_i)$.
\end{center}
Now it suffices to prove the formula is true for $ab$ provided that
it is true for any homogeneous elements $a, b\in S(L)$. Thus, assume
that we have the following two equations:
\begin{center}
$u(S\otimes I)\Delta(a)=\eta\varepsilon(a)~~$ and $~~u(S\otimes
I)\Delta(b)=\eta\varepsilon(b)$,
\end{center}
for any homogeneous elements $a, b\in S(L)$. Then we have
\begin{align*}
&u(S\otimes I)\Delta(ab)\\=&u(S\otimes I)(\Delta(a)\Delta(b))
=u(S\otimes I)((-1)^{|a_{(2)}||b_{(1)}|}a_{(1)}b_{(1)}\otimes a_{(2)}b_{(2)})\\
=&(-1)^{|a_{(2)}||b_{(1)}|}S(a_{(1)}b_{(1)})a_{(2)}b_{(2)}=(-1)^{|a_{(2)}||b_{(1)}|+|a_{(1)}||b_{(1)}|}S(b_{(1)})S(a_{(1)})a_{(2)}b_{(2)}\\
=&S(a_{(1)})a_{(2)}S(b_{(1)})b_{(2)}=\eta\varepsilon(a)\eta\varepsilon(b)=\eta\varepsilon(ab).
\end{align*}
Similarly, we can prove the formula $u(I\otimes
S)\Delta=\eta\varepsilon$ is true, which complete the proof.
\end{exa}

\begin{lemma}\label{lem323}
Retain the notations of Example \ref{exa32}, we have that
\begin{equation}
\{S(a_{(1)}), a_{(2)}\}=0,
\end{equation}
where $\Delta(a)=a_{(1)}\otimes a_{(2)}$ for all $a\in S(L)$.
\end{lemma}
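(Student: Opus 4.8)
The plan is to reduce the identity $\{S(a_{(1)}), a_{(2)}\} = 0$ to the generators $x_i$ of $L$ and then to propagate it multiplicatively, exactly mirroring the inductive strategy used throughout Example~\ref{exa32}. First I would check the base case on $1$ and on a generator $a = x_i \in L$. Since $\Delta(x_i) = x_i \otimes 1 + 1 \otimes x_i$ and $S(x_i) = -x_i$, one computes $\{S((x_i)_{(1)}), (x_i)_{(2)}\} = \{S(x_i), 1\} + \{S(1), x_i\} = \{-x_i, 1\} + \{1, x_i\} = 0$, using that $1$ is Poisson-central (which follows from the biderivation property together with $d(1)=0$, or directly: $\{a,1\}=\{a,1\cdot1\}=2\{a,1\}$ forces $\{a,1\}=0$ in characteristic zero). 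For $a=1$ the statement is trivial since $\Delta(1)=1\otimes1$ and $\{S(1),1\}=\{1,1\}=0$.

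Next I would carry out the inductive step. Suppose $\{S(a_{(1)}), a_{(2)}\} = 0$ and $\{S(b_{(1)}), b_{(2)}\} = 0$ for homogeneous $a, b \in S(L)$; I want the same for $ab$. Since $\Delta$ is an algebra map, $\Delta(ab) = (-1)^{|a_{(2)}||b_{(1)}|} a_{(1)}b_{(1)} \otimes a_{(2)}b_{(2)}$, so using $S(uv) = (-1)^{|u||v|}S(v)S(u)$ the expression becomes a signed sum of terms $S(b_{(1)})S(a_{(1)}) \cdot (a_{(2)}b_{(2)})$ inside the bracket. The key tool is the biderivation property (axiom (iii)) applied twice to expand $\{S(b_{(1)})S(a_{(1)}),\, a_{(2)}b_{(2)}\}$ into four Poisson-bracket terms, each of the form (product)$\cdot\{\text{one }S\text{-factor},\text{ one }\Delta\text{-factor}\}$. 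After using coassociativity to rename the Sweedler legs consistently, each of the four groups should reassemble — again by the biderivation property run backwards — into an expression containing either $\{S(a_{(1)}), a_{(2)}\}$ or $\{S(b_{(1)}), b_{(2)}\}$ as a factor (possibly after invoking $\{S(a_{(1)})\varepsilon(a_{(2)}),\,-\}=\{S(a)\text{-type}\}$ collapses via the counit axiom $a = \varepsilon(a_{(1)})a_{(2)}$), hence vanishes by the induction hypothesis. The graded signs will have to be tracked carefully but should all match up.

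The main obstacle I expect is the bookkeeping in the inductive step: when biderivation is applied to a bracket of two products, the four resulting terms mix the $a$-legs and $b$-legs, and it is not immediately visible that they regroup into multiples of the two induction hypotheses rather than into some genuinely new cross-term like $\{S(a_{(1)}), b_{(2)}\}$. To handle this I would use cocommutativity of $S(L)$ (it is a symmetric algebra, so $\Delta = T\Delta$) and the fact that $S$ is an anti-coalgebra map up to the twist (Lemma~\ref{lem26}(4): $T(S\otimes S)\Delta = \Delta S$), which lets me move $S$ past the legs and collapse the unwanted cross-terms using the antipode defining relation $u(S\otimes I)\Delta = \eta\varepsilon$ — i.e. any orphaned factor of the shape $S(c_{(1)})c_{(2)}$ becomes $\varepsilon(c)1$, killing the cross-terms since $\varepsilon(\{\cdot,\cdot\}) = 0$ and $\varepsilon$ is an algebra map. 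Alternatively, and perhaps more cleanly, one can prove it on generators and then observe that $\{S(a_{(1)}), a_{(2)}\}$, viewed as a function of $a$, is a "coderivation-like" expression whose vanishing on algebra generators propagates by the biderivation and the multiplicativity of $\Delta$ and $S$; I would present whichever of the two routes produces the shorter sign analysis.
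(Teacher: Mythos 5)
Your overall architecture --- base case on $L$, induction on word length, double application of the biderivation property to $\{S(a_{(1)}b_{(1)}),\, a_{(2)}b_{(2)}\}$, and an appeal to the antipode axiom to dispose of the leftover terms --- is exactly the paper's. The base case and the two ``diagonal'' terms of the fourfold expansion (those containing $\{S(a_{(1)}), a_{(2)}\}$ or $\{S(b_{(1)}), b_{(2)}\}$ as a factor) are handled correctly by the induction hypothesis.

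The gap is in your treatment of the two cross-terms, which up to signs are $\{S(b_{(1)}), a_{(2)}\}\,S(a_{(1)})\,b_{(2)}$ and $a_{(2)}\,S(b_{(1)})\,\{S(a_{(1)}), b_{(2)}\}$. You propose to kill each of them by collapsing an ``orphaned factor of the shape $S(c_{(1)})c_{(2)}$'' to $\varepsilon(c)1$, but no such factor is available: in each cross-term one of the two $c$-legs is trapped inside the Poisson bracket, so the antipode relation cannot be applied directly, and in fact the cross-terms do \emph{not} vanish individually. (Test on $a=x$, $b=y\in L$: the cross-terms are $\{-x,y\}$ and $(-1)^{|x||y|}\{-y,x\}$, each equal to $\mp[x,y]\neq 0$ for nonabelian $L$; only their sum is zero.) The correct mechanism, which is within reach of the ingredients you list, is to first expand $0=\{S(c_{(1)})c_{(2)},x\}=\{\varepsilon(c)1,x\}$ by the biderivation property to obtain the swap identities
\[
S(a_{(1)})\{a_{(2)},x\}=-(-1)^{|x||a_{(2)}|}\{S(a_{(1)}),x\}a_{(2)},\qquad \{x,S(b_{(1)})\}b_{(2)}=-(-1)^{|x||b_{(1)}|}S(b_{(1)})\{x,b_{(2)}\},
\]
and then use them, together with graded commutativity and antisymmetry of the bracket, to rewrite \emph{both} cross-terms as $\pm\{S(a_{(1)}),S(b_{(1)})\}a_{(2)}b_{(2)}$ with opposite signs, so that they cancel in pairs rather than separately. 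With that replacement your induction closes, and the remaining sign bookkeeping is exactly the computation carried out in the paper.
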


\begin{proof}
We proceed the proof using induction on the length of $a\in S(L)$.
Observe that formula (1) is true on any $a\in L$ since
\begin{center}
$\{S(a_{(1)}), a_{(2)}\}=\{S(a), 1\}+\{S(1), a\}=\{-a, 1\}+\{1,
a\}=0,$
\end{center}
where $a\in L$ and $\Delta(a)=a\otimes 1+1\otimes a$.

In order to prove that formula (1) is true for all $a\in S(L)$. It
suffices to prove the formula (1) is true for $ab$ provided that
it is true for any elements $a, b\in S(L)$. Thus, assume that we
have the following two equations:
\begin{center}
$\{S(a_{(1)}), a_{(2)}\}=0, \ \ \ \ \ \ \{S(b_{(1)}), b_{(2)}\}=0,$
\end{center}
for any elements $a, b\in S(L)$. Note that
\begin{center}
$\Delta(ab)=\Delta(a)\Delta(b)=(a_{(1)}\otimes
a_{(2)})(b_{(1)}\otimes b_{(2)})
=(-1)^{|a_{(2)}||b_{(1)}|}a_{(1)}b_{(1)}\otimes a_{(2)}b_{(2)}.$
\end{center}
We have
\begin{align*}
&\{S((ab)_{(1)}), (ab)_{(2)}\}\\=&(-1)^{|a_{(2)}||b_{(1)}|}\{S(a_{(1)}b_{(1)}), a_{(2)}b_{(2)}\}\\
=&(-1)^{|a_{(2)}||b_{(1)}|}\{(-1)^{|a_{(1)}||b_{(1)}|}S(b_{(1)})S(a_{(1)}), a_{(2)}b_{(2)}\}\\
=&(-1)^{|a_{(1)}a_{(2)}||b_{(1)}|}[\{S(b_{(1)})S(a_{(1)}), a_{(2)}\}b_{(2)}+(-1)^{|a_{(1)}b_{(1)}||a_{(2)}|}a_{(2)}\{S(b_{(1)})S(a_{(1)}), b_{(2)}\}]\\
=&(-1)^{|a_{(1)}a_{(2)}||b_{(1)}|}[S(b_{(1)})\{S(a_{(1)}),
a_{(2)}\}b_{(2)}
+(-1)^{|a_{(1)}||a_{(2)}|}\{S(b_{(1)}), a_{(2)}\}S(a_{(1)})b_{(2)}]\\&+(-1)^{|a_{(1)}||a_{(2)}b_{(1)}|}[a_{(2)}S(b_{(1)})\{S(a_{(1)}), b_{(2)}\}+(-1)^{|a_{(1)}||b_{(2)}|}a_{(2)}\{S(b_{(1)}), b_{(2)}\}S(a_{(1)})]\\
=&(-1)^{|a_{(1)}||a_{(2)}b_{(1)}|}[(-1)^{|a_{(2)}||b_{(1)}|}\{S(b_{(1)}), a_{(2)}\}S(a_{(1)})b_{(2)}+a_{(2)}S(b_{(1)})\{S(a_{(1)}), b_{(2)}\}].
\end{align*}
From the structure of DG Poisson Hopf algebra $S(L)$, we have
\[0=\{\varepsilon(a)1_A, b\}=\{S(a_{(1)})a_{(2)},
b\}=S(a_{(1)})\{a_{(2)},
b\}+(-1)^{|b||a_{(2)}|}\{S(a_{(1)}), b\}a_{(2)}\]
and
\[0=\{a, \varepsilon(b)1_A\}=\{a, S(b_{(1)})b_{(2)}\}=\{a,
S(b_{(1)})\}b_{(2)}+(-1)^{|a||b_{(1)}|}S(b_{(1)})\{a, b_{(2)}\},\]
which imply that
\begin{center}
$S(a_{(1)})\{a_{(2)}, b\}=-(-1)^{|b||a_{(2)}|}\{S(a_{(1)}),
b\}a_{(2)}$
\end{center}
and
\begin{center}
$\{a, S(b_{(1)})\}b_{(2)}=-(-1)^{|a||b_{(1)}|}S(b_{(1)})\{a,
b_{(2)}\}$.
\end{center}
Thus
\begin{center}
$(-1)^{|a_{(1)}||a_{(2)}b_{(1)}|}[(-1)^{|a_{(2)}||b_{(1)}|}\{S(b_{(1)}), a_{(2)}\}S(a_{(1)})b_{(2)}+a_{(2)}S(b_{(1)})\{S(a_{(1)}), b_{(2)}\}]=0$
\end{center}
by using the above two formulas. Hence we have the conclusion.
\end{proof}

\begin{lemma}\label{lem33}
If $(A, u, \eta, \Delta, \varepsilon, S, \{\cdot, \cdot\}, d)$ is a
DG Poisson Hopf algebra, then $dS=Sd$, $S(\{a,
b\})=(-1)^{|a||b|}\{S(b), S(a)\}$ and the counit $\varepsilon$ is a
DG Poisson homomorphism.
\end{lemma}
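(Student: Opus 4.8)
The three assertions are largely independent, and I would establish $dS=Sd$ first. A DG Poisson Hopf algebra is in particular a DG Hopf algebra, so this is the standard fact that the differential commutes with the antipode, which I would prove by the usual convolution argument. On $\mathrm{Hom}_{\Bbbk}(A,A)$, with the convolution product $f\star g=u(f\otimes g)\Delta$ (unit $\eta\varepsilon$), let $\partial$ denote the graded commutator with $d$. Since $d$ is a derivation for $u$, a coderivation for $\Delta$, and satisfies $d\eta=0$ and $\varepsilon d=0$, the operator $\partial$ is a graded derivation of $\star$ with $\partial(I)=0$ and $\partial(\eta\varepsilon)=0$. Applying $\partial$ to the antipode relation $S\star I=\eta\varepsilon$ gives $\partial(S)\star I=0$, and convolving on the right with $S$ (using $I\star S=\eta\varepsilon$) gives $\partial(S)=dS-Sd=0$.

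For the statement that $\varepsilon$ is a DG Poisson homomorphism: $\varepsilon:A\to\Bbbk$ is already a graded algebra homomorphism, $\varepsilon d=0=d_{\Bbbk}\varepsilon$, and $\Bbbk$ carries the zero Poisson bracket, so it remains only to prove $\varepsilon\{a,b\}=0$ for all $a,b$. I would get this by applying $\varepsilon\otimes\varepsilon$ to the identity $\Delta\{a,b\}=\{\Delta a,\Delta b\}_{A\otimes A}$ of Definition \ref{defn3.1}(iii): the left-hand side collapses to $\varepsilon\{a,b\}$ by the counit axiom, while on the right, writing out the two summands of $\{\Delta a,\Delta b\}_{A\otimes A}$ and using that $\varepsilon(a_{(2)})\neq0$ forces $|a_{(2)}|=0$ (so the Koszul signs vanish on the surviving terms), each summand collapses---via $\sum a_{(1)}\varepsilon(a_{(2)})=a$ and bilinearity of the bracket---again to $\varepsilon\{a,b\}$. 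Hence $\varepsilon\{a,b\}=2\varepsilon\{a,b\}$, and since $\Bbbk$ has characteristic zero, $\varepsilon\{a,b\}=0$.

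The identity $S\{a,b\}=(-1)^{|a||b|}\{S(b),S(a)\}$ is the substantive part; by graded antisymmetry it is equivalent to $S\{a,b\}=-\{S(a),S(b)\}$, i.e.\ to the assertion that $S$ is an anti-Poisson map (cf.\ Example \ref{exa30}). The ingredients are: $\Delta$ is a Poisson map; $S$ is a graded \emph{algebra} homomorphism (Lemma \ref{lem26}(1) gives $S(xy)=(-1)^{|x||y|}S(y)S(x)$, which by graded commutativity equals $S(x)S(y)$); the antipode axioms; and $\varepsilon\{a,b\}=0$ from the previous step. First I would record, in any DG Poisson Hopf algebra, the auxiliary identity
\[
\sum S(a_{(1)})\,\{a_{(2)},c\}\,S(a_{(3)})=-\{S(a),c\}
\]
(up to the expected Koszul sign, for all $a,c$), obtained by rewriting $\{a_{(2)},c\}S(a_{(3)})$ via the Leibniz rule through $\{a_{(2)}S(a_{(3)}),c\}$ and then collapsing with $\sum a_{(1)}\otimes a_{(2)}S(a_{(3)})=a\otimes1$, $\sum S(a_{(1)})a_{(2)}\otimes a_{(3)}=1\otimes a$, and $\{1,c\}=0$; the variant with $c$ in the first slot follows by antisymmetry. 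Then I would apply the identity $S(z)=\sum S(z_{(1)})\,z_{(2)}\,S(z_{(3)})$ (a consequence of the antipode axioms) to $z=\{a,b\}$, and expand $\Delta^{(2)}\{a,b\}=\{\Delta^{(2)}a,\Delta^{(2)}b\}$ by iterating Definition \ref{defn3.1}(iii) into a sum of three terms, with the Poisson bracket in the first, second, or third tensor factor and ordinary products in the others. Applying the iterated multiplication $u^{(2)}(S\otimes I\otimes S)$ termwise, the two outer terms collapse to $S\{a,b\}$ apiece (graded commutativity to separate the $a$- and $b$-factors, then the antipode relations above), while the middle term, after $S(a_{(1)}b_{(1)})=S(a_{(1)})S(b_{(1)})$, graded commutativity, and two applications of the auxiliary identity, becomes $\{S(a),S(b)\}$; this yields $S\{a,b\}=S\{a,b\}+\{S(a),S(b)\}+S\{a,b\}$, hence $S\{a,b\}=-\{S(a),S(b)\}=(-1)^{|a||b|}\{S(b),S(a)\}$. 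The whole difficulty lies in this last argument: each manipulation is routine, but tracking the Koszul signs through the iterated bracket on $A^{\otimes3}$, through the auxiliary identity, and through the several antipode collapses---so that the middle term contributes exactly $+\{S(a),S(b)\}$ and the three terms cancel as claimed---is where care is needed.
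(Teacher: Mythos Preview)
Your proposal is correct, and for two of the three assertions it takes a genuinely different route from the paper.

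For $dS=Sd$, the paper proceeds by direct expansion: it writes $dS(a)=d\bigl(S(a_{(1)})a_{(2)}S(a_{(3)})\bigr)$, applies the Leibniz rule, and solves for $dS(a)=-(-1)^{|a_{(2)}|}S(a_{(1)})d(a_{(2)})S(a_{(3)})$; then a separate computation from $\varepsilon d=0$ gives $d(a_{(1)})S(a_{(2)})=-(-1)^{|a_{(1)}|}a_{(1)}Sd(a_{(2)})$, and combining the two yields $dS=Sd$. Your convolution-derivation argument is more conceptual and avoids these intermediate identities entirely.

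For $\varepsilon\{a,b\}=0$ the two arguments are essentially the same: the paper applies $u(\varepsilon\otimes I)$ to $\Delta\{a,b\}$ and then $\varepsilon$ again, arriving (as you do) at $\varepsilon\{a,b\}=2\,\varepsilon\{a,b\}$.

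For $S\{a,b\}=-\{S(a),S(b)\}$ the paper uses a two-fold rather than three-fold expansion. It first records the shift identities $S(a_{(1)})\{a_{(2)},b\}=-(-1)^{|b||a_{(2)}|}\{S(a_{(1)}),b\}a_{(2)}$ and $\{a,S(b_{(1)})\}b_{(2)}=-(-1)^{|a||b_{(1)}|}S(b_{(1)})\{a,b_{(2)}\}$ (obtained from $\{\varepsilon(a)1,b\}=0$ and $\{a,\varepsilon(b)1\}=0$ via Leibniz). Then it applies the antipode relation $S(z_{(1)})z_{(2)}=\varepsilon(z)1$ to $z=\{a,b\}$ using only $\Delta\{a,b\}$, obtains $S(\{a_{(1)},b_{(1)}\})a_{(2)}b_{(2)}=-\{S(a_{(1)}),S(b_{(1)})\}a_{(2)}b_{(2)}$ via the two shift identities, and finally peels off the right-hand factors by multiplying by $S(a_{(3)})S(b_{(3)})$. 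Your approach via $S(z)=S(z_{(1)})z_{(2)}S(z_{(3)})$ and the symmetric auxiliary identity $S(a_{(1)})\{a_{(2)},c\}S(a_{(3)})=-\{S(a),c\}$ is structurally cleaner, but requires expanding $\Delta^{(2)}\{a,b\}$ on $A^{\otimes 3}$ and hence carries heavier sign bookkeeping; the paper's route keeps everything at the level of $\Delta$ at the cost of a slightly more ad hoc pair of auxiliary identities.
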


\begin{proof}
Firstly, since $\{a\otimes a', b\otimes b'\}_{A\otimes
A}=(-1)^{|a'||b|}(\{a, b\}\otimes a'b'+ab\otimes \{a', b'\})$ and $\Delta(\{a, b\})=\{\Delta(a), \Delta(b)\}_{A\otimes
A}$, for
all $a, b, a', b'\in A$, we have
\begin{align*}
\{a, b\}&=u(\varepsilon\otimes I)\Delta(\{a, b\})\\
&=u(\varepsilon\otimes I)((-1)^{|b_{(1)}||a_{(2)}|}(\{a_{(1)},
b_{(1)}\}\otimes
a_{(2)}b_{(2)}+a_{(1)}b_{(1)}\otimes \{a_{(2)}, b_{(2)}\}))\\
&=(-1)^{|b_{(1)}||a_{(2)}|}(\varepsilon(\{a_{(1)},
b_{(1)}\})a_{(2)}b_{(2)}+\varepsilon(a_{(1)}b_{(1)})\{a_{(2)}, b_{(2)}\})\\
&=(-1)^{|b_{(1)}||a_{(2)}|}(\varepsilon(\{a_{(1)},
b_{(1)}\})a_{(2)}b_{(2)}+\{a, b\}),
\end{align*}
for all $a, b\in A$, and so we have
\begin{align*}
\varepsilon(\{a,
b\})&=(-1)^{|b_{(1)}||a_{(2)}|}\varepsilon(\varepsilon(\{a_{(1)},
b_{(1)}\})a_{(2)}b_{(2)}+\{a, b\})\\
&=(-1)^{|b_{(1)}||a_{(2)}|}(\varepsilon(\{a_{(1)},
b_{(1)}\})\varepsilon(a_{(2)}b_{(2)})+\varepsilon(\{a, b\}))\\
&=(-1)^{|b_{(1)}||a_{(2)}|}(\varepsilon(\{a, b\})+\varepsilon(\{a,
b\})),
\end{align*}
hence $\varepsilon(\{a, b\})=0$. Note that $\varepsilon d=0$, thus the
counit $\varepsilon$ is a DG Poisson homomorphism.

Secondly, since
\[0=\{\varepsilon(a)1_A, b\}=\{S(a_{(1)})a_{(2)},
b\}=S(a_{(1)})\{a_{(2)},
b\}+(-1)^{|b||a_{(2)}|}\{S(a_{(1)}), b\}a_{(2)}\]
and
\[0=\{a, \varepsilon(b)1_A\}=\{a, S(b_{(1)})b_{(2)}\}=\{a,
S(b_{(1)})\}b_{(2)}+(-1)^{|a||b_{(1)}|}S(b_{(1)})\{a, b_{(2)}\},\]
we have
\begin{center}
$S(a_{(1)})\{a_{(2)}, b\}=-(-1)^{|b||a_{(2)}|}\{S(a_{(1)}),
b\}a_{(2)}$
\end{center}
and
\begin{center}
$\{a, S(b_{(1)})\}b_{(2)}=-(-1)^{|a||b_{(1)}|}S(b_{(1)})\{a,
b_{(2)}\}$.
\end{center}
Note that
\begin{center}
$\Delta(\{a, b\})=\{\Delta(a),
\Delta(b)\}=(-1)^{|b_{(1)}||a_{(2)}|}(\{a_{(1)}, b_{(1)}\}\otimes
a_{(2)}b_{(2)}+a_{(1)}b_{(1)}\otimes \{a_{(2)}, b_{(2)}\})$,
\end{center}
since $A$ is a DG Poisson Hopf algebra. We have
\begin{align*}
0=\varepsilon(\{a, b\})1_A=S(\{a, b\}_{(1)})\{a,
b\}_{(2)}=(-1)^{|b_{(1)}||a_{(2)}|}(S(\{a_{(1)},
b_{(1)}\})a_{(2)}b_{(2)}+S(a_{(1)}b_{(1)})\{a_{(2)}, b_{(2)}\}).
\end{align*}
Thus
\begin{align*}
&S(\{a_{(1)}, b_{(1)}\})a_{(2)}b_{(2)}\\=&-S(a_{(1)}b_{(1)})\{a_{(2)}, b_{(2)}\}=-(-1)^{|a_{(1)}||b_{(1)}|}S(b_{(1)})S(a_{(1)})\{a_{(2)}, b_{(2)}\}\\
=&(-1)^{|a_{(1)}||b_{(1)}|+|b_{(2)}||a_{(2)}|}S(b_{(1)})\{S(a_{(1)}),
b_{(2)}\}a_{(2)}=-(-1)^{|b_{(2)}||a_{(2)}|}\{S(a_{(1)}),
S(b_{(1)})\}b_{(2)}a_{(2)}\\=&-\{S(a_{(1)}), S(b_{(1)})\}a_{(2)}b_{(2)}.
\end{align*}
Therefore
\begin{align*}
&S(\{a, b\})\\=&S(\{a_{(1)}\varepsilon(a_{(2)}),
b_{(1)}\varepsilon(b_{(2)})\})=S(\{a_{(1)},
b_{(1)}\})\varepsilon(a_{(2)})\varepsilon(b_{(2)})\\=&S(\{a_{(1)},
b_{(1)}\})a_{(2)}S(a_{(3)})b_{(2)}S(b_{(3)})=-(-1)^{|b_{(2)}||a_{(3)}|}\{S(a_{(1)}),
S(b_{(1)})\}a_{(2)}b_{(2)}S(a_{(3)})S(b_{(3)})\\
=&-\{S(a_{(1)}), S(b_{(1)})\}a_{(2)}S(a_{(3)})b_{(2)}S(b_{(3)})=-\{S(a_{(1)}), S(b_{(1)})\}\varepsilon(a_{(2)})\varepsilon(b_{(2)})\\
=&-\{S(a_{(1)}\varepsilon(a_{(2)})),
S(b_{(1)}\varepsilon(b_{(2)}))\}=-\{S(a),
S(b)\}=(-1)^{|a||b|}\{S(b), S(a)\}.
\end{align*}
Finally, it only remains to show that $dS=Sd$. Note that
\begin{align*}
dS(a)=&dS(a_{(1)}\varepsilon(a_{(2)}))=d(S(a_{(1)})\varepsilon(a_{(2)}))\\=&d(S(a_{(1)})a_{(2)}S(a_{(3)}))
=dS(a_{(1)})a_{(2)}S(a_{(3)})+(-1)^{|a_{(1)}|}S(a_{(1)})d(a_{(2)}S(a_{(3)}))\\
=&dS(a_{(1)})a_{(2)}S(a_{(3)})+(-1)^{|a_{(1)}|}S(a_{(1)})(d(a_{(2)})S(a_{(3)})+(-1)^{|a_{(2)}|}a_{(2)}dS(a_{(3)}))\\
=&dS(a)+(-1)^{|a_{(1)}|}S(a_{(1)})d(a_{(2)})S(a_{(3)})+(-1)^{|a_{(1)}a_{(2)}|}dS(a),
\end{align*}
we get
\begin{center}
$dS(a)=-(-1)^{|a_{(2)}|}S(a_{(1)})d(a_{(2)})S(a_{(3)}).$
\end{center}
Observe that
\begin{center}
$\Delta d(a)=d(a_{(1)})\otimes
a_{(2)}+(-1)^{|a_{(1)}|}a_{(1)}\otimes d(a_{(2)})$,
\end{center}
we can see
\begin{center}
$0=\varepsilon d(a)1_A=\eta\varepsilon
d(a)=d(a)_{(1)}S(d(a)_{(2)})=d(a_{(1)})S(a_{(2)})+(-1)^{|a_{(1)}|}a_{(1)}Sd(a_{(2)})$,
\end{center}
thus
\begin{center}
$d(a_{(1)})S(a_{(2)})=-(-1)^{|a_{(1)}|}a_{(1)}Sd(a_{(2)})$.
\end{center}
Therefore,
\begin{center}
$dS(a)=-(-1)^{|a_{(2)}|}S(a_{(1)})d(a_{(2)})S(a_{(3)})=S(a_{(1)})a_{(2)}Sd(a_{(3)})=Sd(a)$,
\end{center}
as claimed.
\end{proof}

\begin{lemma}\label{lem324}
If $(A, u, \eta, \Delta, \varepsilon, S, \{\cdot, \cdot\}, d)$ is a
DG Poisson Hopf algebra, then for all $a\in A$, $\{a_{(1)}, S(a_{(2)})\}=0$ if and
only if $\{S(a_{(1)}), a_{(2)}\}=0$, where $\Delta(a)=a_{(1)}\otimes
a_{(2)}$.
\end{lemma}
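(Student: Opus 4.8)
The plan is to exhibit a direct linear relation between the two expressions, mediated by the antipode. Concretely, I claim that for every homogeneous $a\in A$ one has
$S(\{S(a_{(1)}),a_{(2)}\})=-\{a_{(1)},S(a_{(2)})\}$, where as usual $\Delta(a)=a_{(1)}\otimes a_{(2)}$ and the Sweedler summation is understood. Once this is established the lemma follows at once: since the underlying graded algebra of a graded Poisson algebra is graded commutative, Lemma~\ref{lem26}(5) gives $S\circ S=I$, so $S$ is a bijection on $A$; hence $\{S(a_{(1)}),a_{(2)}\}=0$ if and only if its image under $S$, namely $-\{a_{(1)},S(a_{(2)})\}$, vanishes, that is, if and only if $\{a_{(1)},S(a_{(2)})\}=0$.

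To prove the displayed identity I would work termwise in the Sweedler sum. Applying $S$ to a single summand $\{S(a_{(1)}),a_{(2)}\}$ and using the antipode--bracket identity $S(\{x,y\})=(-1)^{|x||y|}\{S(y),S(x)\}$ from Lemma~\ref{lem33} with $x=S(a_{(1)})$ and $y=a_{(2)}$, together with $S^2=I$ (so that $S(S(a_{(1)}))=a_{(1)}$), turns it into $(-1)^{|a_{(1)}||a_{(2)}|}\{S(a_{(2)}),a_{(1)}\}$. Next, the graded anti-symmetry of the bracket, which since the bracket has degree $0$ reads $\{u,v\}=-(-1)^{|u||v|}\{v,u\}$, rewrites $\{S(a_{(2)}),a_{(1)}\}$ as $-(-1)^{|a_{(1)}||a_{(2)}|}\{a_{(1)},S(a_{(2)})\}$ (recall $|S(a_{(2)})|=|a_{(2)}|$ since $S$ has degree $0$). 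The two factors $(-1)^{|a_{(1)}||a_{(2)}|}$ multiply to $1$, and summing over the Sweedler components yields the claimed identity $S(\{S(a_{(1)}),a_{(2)}\})=-\{a_{(1)},S(a_{(2)})\}$. These manipulations are legitimate componentwise even though the exponent $|a_{(1)}||a_{(2)}|$ varies with the Sweedler term, because every term of $\Delta(a)$ has total degree $|a|$.

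For the converse implication no extra work is needed: applying $S$ once more to $S(\{S(a_{(1)}),a_{(2)}\})=-\{a_{(1)},S(a_{(2)})\}$ and using $S^2=I$ gives $S(\{a_{(1)},S(a_{(2)})\})=-\{S(a_{(1)}),a_{(2)}\}$, so the vanishing of $\{a_{(1)},S(a_{(2)})\}$ forces the vanishing of $\{S(a_{(1)}),a_{(2)}\}$. I do not anticipate a genuine obstacle here; the only points requiring attention are the sign bookkeeping in Sweedler notation and the routine justification that $S^2=I$ is available in this setting, which rests precisely on the graded commutativity built into the definition of a graded Poisson algebra. If one preferred to avoid invoking $S^2=I$ altogether, the same conclusion can be obtained symmetrically from Lemma~\ref{lem33}, the anti-symmetry of the bracket, and the counit identities $S(a_{(1)})a_{(2)}=\varepsilon(a)1_A=a_{(1)}S(a_{(2)})$ already exploited in the proof of Lemma~\ref{lem33}, but the argument above is the most economical route.
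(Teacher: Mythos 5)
Your proposal is correct and follows essentially the same route as the paper: both establish the identity $\{a_{(1)},S(a_{(2)})\}=-S(\{S(a_{(1)}),a_{(2)}\})$ by combining $S\circ S=I$ (available via Lemma~\ref{lem26}(5) because the underlying algebra is graded commutative), the antipode--bracket identity of Lemma~\ref{lem33}, and the graded antisymmetry of the bracket, and then conclude from the bijectivity of $S$. The sign bookkeeping in your termwise Sweedler computation checks out.
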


\begin{proof}
By the definition of a DG Poisson Hopf algebra, $A$ is graded
commutative graded Hopf algebra, then we have $S\cdot S=I$ by Lemma
\ref{lem26}. Note that
\begin{align*}
\{a_{(1)}, S(a_{(2)})\}=\{S\cdot S(a_{(1)}),
S(a_{(2)})\}=(-1)^{|a_{(1)}||a_{(2)}|}S(\{a_{(2)},
S(a_{(1)})\})=-S(\{ S(a_{(1)}), a_{(2)}\})
\end{align*}
by Lemma \ref{lem33}. It is easy to see sufficiency is true.
Similarly, we can prove necessity by the same way. Therefore, we are
done.
\end{proof}

Let us consider the bialgebra structure of universal enveloping algebras
for DG Poisson bialgebras(note: the degree of Poisson bracket is zero). Let $(A, \cdot, \{\cdot, \cdot\}, d)$ be a
DG Poisson algebra. Define a $\Bbbk$-linear map $\{\cdot,
\cdot\}_1$ of degree 0 on $A$ by
\begin{center}
$\{a, b\}_1=(-1)^{|a||b|}\{b, a\}=-\{a, b\}$,
\end{center}
for all $a, b\in A$. Then $A_1=(A, \cdot, \{\cdot, \cdot\}_1, d)$ is
a DG Poisson algebra by Example \ref{exa30}. For a DG algebra $(B, \cdot, d)$, we denote by
$B^{op}=(B, \circ, d)$ the DG opposite algebra of $B$, where $a\circ
b=(-1)^{|a||b|}b\cdot a=a\cdot b$.

\begin{prop}\label{prop34} \cite{LWZ}
Let $(A^e, m, h)$ be the universal enveloping algebra for a DG
Poisson algebra $(A, \cdot, \{\cdot, \cdot\}, d)$. Then $(A^{eop},
m, h)$ is the universal enveloping algebra for $A_1=(A, \cdot,
\{\cdot, \cdot\}_1, d)$.
\end{prop}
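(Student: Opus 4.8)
The plan is to realize $(A^{eop},m,h)$ as an enveloping datum for $A_1$ in the sense of Definition~\ref{defn2} and then to deduce its universal property from that of $A^e$ by passing to opposite DG algebras; the whole argument is essentially formal modulo one piece of sign bookkeeping.

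First I would record the elementary dictionary between $A^e$ and $A^{eop}$. Writing $\circ$ for the product of $A^{eop}$, so $x\circ y=(-1)^{|x||y|}yx$, and $[\cdot,\cdot]$, $[\cdot,\cdot]^{op}$ for the standard degree-zero Lie brackets of $A^e$ and $A^{eop}$, a one-line sign check gives $[x,y]^{op}=-[x,y]$. Using this, together with the graded commutativity of $A$ (which forces $m(A)$ to be a graded-commutative subalgebra of $A^e$, so that $m$ remains an algebra map when its target is read as $A^{eop}$) and the graded anticommutativity of $\{\cdot,\cdot\}$, I would check the structural requirements of Definition~\ref{defn2} for $(A^{eop},m,h)$ relative to the bracket $\{\cdot,\cdot\}_1$: that $m\colon (A,d)\to (A^{eop},\partial)$ is a DG algebra map; that $h(\{a,b\}_1)=-h(\{a,b\})=-[h(a),h(b)]=[h(a),h(b)]^{op}$, so $h\colon (A_1,\{\cdot,\cdot\}_1,d)\to ((A^{eop})_P,[\cdot,\cdot]^{op},\partial)$ is a DG Lie algebra map; and that the two compatibility identities of Definition~\ref{defn2} hold in $A^{eop}$. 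The first identity collapses at once onto the one already known in $A^e$ after expanding $\circ$. The second, $h(ab)=m(a)\circ h(b)+(-1)^{|a||b|}m(b)\circ h(a)$, is the one place where a genuine computation is needed: unwinding $\circ$ reduces it to the identity $m(a)h(b)+(-1)^{|a||b|}m(b)h(a)=h(a)m(b)+(-1)^{|a||b|}h(b)m(a)$ in $A^e$, which I would prove by substituting the $A^e$-relation $h(a)m(b)=m(\{a,b\})+(-1)^{|a||b|}m(b)h(a)$ together with its $(b,a)$-analogue and then cancelling the two bracket terms via $\{b,a\}=-(-1)^{|a||b|}\{a,b\}$.

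For the universal property, let $(D,\delta)$ be a DG algebra equipped with a DG algebra map $f\colon (A,d)\to (D,\delta)$ and a DG Lie algebra map $g\colon (A_1,\{\cdot,\cdot\}_1,d)\to (D_P,[\cdot,\cdot],\delta)$ satisfying the two compatibility conditions of Definition~\ref{defn2} relative to $A_1$. Passing to the opposite DG algebra $D^{op}$, the very same computations---now read inside $D$ rather than $A^e$---show that $f$ and $g$ form an enveloping datum for $A$ itself: $f\colon (A,d)\to (D^{op},\delta)$ is a DG algebra map, $g\colon (A,\{\cdot,\cdot\},d)\to ((D^{op})_P,[\cdot,\cdot]^{op},\delta)$ is a DG Lie algebra map, and the two compatibility identities of Definition~\ref{defn2} for $A$ hold with respect to $(D^{op},f,g)$. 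The universal property of $A^e$ then produces a unique DG algebra map $\phi\colon (A^e,\partial)\to (D^{op},\delta)$ with $\phi m=f$ and $\phi h=g$.

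Finally I would observe that a degree-zero linear map $A^e\to D$ commuting with the differentials is an algebra homomorphism $A^e\to D^{op}$ if and only if it is an algebra homomorphism $A^{eop}\to D$: in both readings multiplicativity says $\phi(xy)=(-1)^{|x||y|}\phi(y)\phi(x)$, since $x\circ y=(-1)^{|x||y|}yx$ on each side and neither the differential nor the unit is affected. Hence the map $\phi$ above is simultaneously a DG algebra map $(A^{eop},\partial)\to (D,\delta)$ with $\phi m=f$ and $\phi h=g$, and its uniqueness as such is inherited from its uniqueness as a map into $D^{op}$. This is precisely the universal property required of $(A^{eop},m,h)$, so $(A^{eop},m,h)$ is the universal enveloping algebra of $A_1$. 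The only real obstacle in the proof is the sign computation isolated in the second compatibility identity; the rest follows formally from the involutivity of the opposite-algebra construction.
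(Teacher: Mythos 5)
Your argument is correct: the verification that $(A^{eop},m,h)$ satisfies the structural identities of Definition~\ref{defn2} for $\{\cdot,\cdot\}_1$, the reduction of the one nontrivial sign check to $m(a)h(b)+(-1)^{|a||b|}m(b)h(a)=h(a)m(b)+(-1)^{|a||b|}h(b)m(a)$ via the relation $h(a)m(b)=m(\{a,b\})+(-1)^{|a||b|}m(b)h(a)$ and graded antisymmetry, and the transfer of the universal property through $D^{op}$ all check out. The paper itself gives no proof of this proposition (it is quoted from \cite{LWZ}), but your route is exactly the formal opposite-algebra argument one expects there, so there is nothing to flag.
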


Now we are ready to prove Theorem \ref{thm35}.

\begin{proof}[Proof of Theorem 1.1]
Since $\Delta$ is a DG Poisson homomorphism and $(A^e\otimes A^e,
m\otimes m, m\otimes h+h\otimes m)$ is the universal enveloping
algebra of a DG Poisson algebra $A\otimes A$ (see Lemmas \ref{lem22} and \ref{lem24}), there exists a unique DG algebra homomorphism
$\Delta^e: A^e\rightarrow A^e\otimes A^e$ such that
$\Delta^em=(m\otimes m)\Delta$ and $\Delta^eh=(m\otimes h+h\otimes
m)\Delta$ by Lemma \ref{lem25}. Similarly, there exists a DG algebra
homomorphism $\varepsilon^e$ from $A^e$ into $\Bbbk$ such that
$\varepsilon^em=\varepsilon$ and $\varepsilon^eh=0$, since $(\Bbbk,
I_{\Bbbk}, 0)$ is the universal enveloping algebra of the field
$\Bbbk$ with trivial differential and trivial Poisson bracket.

Now we claim that $(A^e, u^e, \eta^e, \Delta^e, \varepsilon^e, d^e)$
is a DG bialgebra. Note that $(A^e, u^e, \eta^e, d^e)$ is a DG
algebra. Next, we show that $(A^e, \Delta^e, \varepsilon^e, d^e)$ is
a DG coalgebra. Since $(A, \Delta, \varepsilon, d)$ is a DG
coalgebra, we have
$a_{(11)}\otimes a_{(12)}\otimes a_{(2)}=a_{(1)}\otimes
a_{(21)}\otimes a_{(22)}$, $\varepsilon(a_{(1)})a_{(2)}=a=a_{(1)}\varepsilon(a_{(2)})$, $\varepsilon d=0$ and $\Delta
d(a)=d(a_{(1)})\otimes a_{(2)}+(-1)^{|a_{(1)}|}a_{(1)}\otimes
d(a_{(2)})$, for all $a\in A$. Thus the coassociativity is immediate from
\begin{align*}
(\Delta^e\otimes I)\Delta^em(a)&=(\Delta^e\otimes I)(m\otimes m)\Delta(a)
=\Delta^em(a_{(1)})\otimes m(a_{(2)})\\&=(m\otimes m)\Delta(a_{(1)})\otimes m(a_{(2)})=m(a_{(11)})\otimes m(a_{(12)})\otimes m(a_{(2)}),
\end{align*}
\begin{align*}
(I\otimes \Delta^e)\Delta^em(a)&=(I\otimes \Delta^e)(m\otimes m)\Delta(a)
=m(a_{(1)})\otimes \Delta^em(a_{(2)})\\&=m(a_{(1)})\otimes (m\otimes m)\Delta(a_{(2)})=m(a_{(1)})\otimes m(a_{(21)})\otimes m(a_{(22)}),
\end{align*}
\begin{align*}
(\Delta^e\otimes I)\Delta^eh(a)=&(\Delta^e\otimes I)(m\otimes h+h\otimes m)\Delta(a)\\
=&\Delta^em(a_{(1)})\otimes h(a_{(2)})+\Delta^eh(a_{(1)})\otimes m(a_{(2)})\\
=&(m\otimes m)\Delta(a_{(1)})\otimes h(a_{(2)})+(m\otimes h+h\otimes m)\Delta(a_{(1)})\otimes m(a_{(2)})\\
=&m(a_{(11)})\otimes m(a_{(12)})\otimes
h(a_{(2)})+m(a_{(11)})\otimes h(a_{(12)})\otimes
m(a_{(2)})\\&+h(a_{(11)})\otimes m(a_{(12)})\otimes m(a_{(2)})
\end{align*}
and
\begin{align*}
(I\otimes \Delta^e)\Delta^eh(a)=&(I\otimes \Delta^e)(m\otimes h+h\otimes m)\Delta(a)\\
=&m(a_{(1)})\otimes \Delta^eh(a_{(2)})+h(a_{(1)})\otimes \Delta^em(a_{(2)})\\
=&m(a_{(1)})\otimes (m\otimes h+h\otimes m)\Delta(a_{(2)})+h(a_{(1)})\otimes (m\otimes m)\Delta(a_{(2)})\\
=&m(a_{(1)})\otimes m(a_{(21)})\otimes h(a_{(22)})+m(a_{(1)})\otimes
h(a_{(21)})\otimes m(a_{(22)})\\&+h(a_{(1)})\otimes m(a_{(21)})\otimes
m(a_{(22)}).
\end{align*}
Similarly, we can prove the counitary property of $A$ is also true.
In order to prove $d$ is a coderivation of degree 1, first we have
$\varepsilon^ed^em(a)=\varepsilon^emd(a)=\varepsilon d(a)=0$ and
$\varepsilon^ed^eh(a)=\varepsilon^ehd(a)=0$, and then we should
prove $\Delta^ed^e=(d^e\otimes I+T(d^e\otimes I)T)\Delta^e$, which
follows from
\begin{align*}
(d^e\otimes I+T(d^e\otimes I)T)\Delta^em(a)&=(d^e\otimes I+T(d^e\otimes I)T)(m\otimes m)\Delta(a)\\
&=d^em(a_{(1)})\otimes m(a_{(2)})+(-1)^{|a_{(1)}|}m(a_{(1)})\otimes d^em(a_{(2)})\\
&=md(a_{(1)})\otimes m(a_{(2)})+(-1)^{|a_{(1)}|}m(a_{(1)})\otimes
md(a_{(2)}),
\end{align*}
\begin{align*}
\Delta^ed^em(a)=&(m\otimes m)\Delta d(a)\\
=&(m\otimes m)(d(a_{(1)})\otimes a_{(2)}+(-1)^{|a_{(1)}|}a_{(1)}\otimes d(a_{(2)}))\\
=&md(a_{(1)})\otimes m(a_{(2)})+(-1)^{|a_{(1)}|}m(a_{(1)})\otimes
md(a_{(2)}),
\end{align*}
\begin{align*}
\Delta^ed^eh(a)=&(m\otimes h+h\otimes m)\Delta d(a)\\
=&(m\otimes h+h\otimes m)(d(a_{(1)})\otimes a_{(2)}+(-1)^{|a_{(1)}|}a_{(1)}\otimes d(a_{(2)}))\\
=&md(a_{(1)})\otimes h(a_{(2)})+hd(a_{(1)})\otimes
m(a_{(2)})\\&+(-1)^{|a_{(1)}|}m(a_{(1)})\otimes
hd(a_{(2)})+(-1)^{|a_{(1)}|}h(a_{(1)})\otimes md(a_{(2)})
\end{align*}
and
\begin{align*}
(d^e\otimes I+T(d^e\otimes I)T)\Delta^eh(a)
=&(d^e\otimes I+T(d^e\otimes I)T)(m\otimes h+h\otimes m)\Delta(a)\\
=&d^em(a_{(1)})\otimes h(a_{(2)})+d^eh(a_{(1)})\otimes m(a_{(2)})\\
&+(-1)^{|a_{(1)}|}m(a_{(1)})\otimes d^eh(a_{(2)})+(-1)^{|a_{(1)}|}h(a_{(1)})\otimes d^em(a_{(2)})\\
=&md(a_{(1)})\otimes h(a_{(2)})+hd(a_{(1)})\otimes
m(a_{(2)})\\&+(-1)^{|a_{(1)}|}m(a_{(1)})\otimes
hd(a_{(2)})+(-1)^{|a_{(1)}|}h(a_{(1)})\otimes md(a_{(2)}).
\end{align*}
Hence $(A^e, d^e, \Delta^e, \varepsilon^e)$ is a DG coalgebra. Note
that $\Delta^e$ and $\varepsilon^e$ are DG algebra homomorphisms,
which means $(A^e, u^e, \eta^e, \Delta^e, \varepsilon^e)$ is a
graded bialgebra. Therefore, we finish the proof.
\end{proof}

In general, $A^e$ is just a DG bialgebra. But in some special cases,
$A^e$ can be endowed with the Hopf structure, such that $(A^e, u^e,
\eta^e, \Delta^e, \varepsilon^e, S^e, d^e)$ becomes a DG Hopf
algebra. Thus, we give the proof of Theorem \ref{coro36}.

\begin{proof}[Proof of Theorem 1.2]
Since the antipode $S$ is a DG Poisson homomorphism from $A$ into
$A_1$ by Lemmas \ref{lem26} and \ref{lem33}, there is a DG algebra
homomorphism $S^e: A^e\rightarrow A^{eop}$ such that $S^em=mS$ and
$S^eh=hS$ by Lemma \ref{lem25} and Proposition \ref{prop34}.

As for part (2), since $(A^e, u^e, \eta^e, \Delta^e, \varepsilon^e,
d^e)$ is a DG bialgebra by Theorem \ref{thm35}, it suffices to show
that $S^e$ is an antipode for $A^e$ if and only if $\{S(a_{(1)}),
a_{(2)}\}=0$, where $\Delta(a)=a_{(1)}\otimes a_{(2)}$ for all $a\in
A$. In fact, we have
 \begin{center}
$\eta^e\varepsilon^em(a)=\eta^e\varepsilon(a)=\varepsilon(a)1_{A^e},
~~\eta^e\varepsilon^eh(a)=0$,
\end{center}
 \begin{align*}
u^e(S^e\otimes I)\Delta^em(a)&=u^e(S^e\otimes I)(m\otimes m)\Delta(a)\\
&=S^em(a_{(1)})m(a_{(2)})=mS(a_{(1)})m(a_{(2)})\\&=m(S(a_{(1)})a_{(2)})=m(\eta\varepsilon(a))=\varepsilon(a)1_{A^e}
\end{align*}
and
 \begin{align*}
u^e(S^e\otimes I)\Delta^eh(a)=&u^e(S^e\otimes I)(m\otimes
h+h\otimes m)\Delta(a)
=S^em(a_{(1)})h(a_{(2)})+S^eh(a_{(1)})m(a_{(2)})\\=&mS(a_{(1)})h(a_{(2)})+hS(a_{(1)})m(a_{(2)})
=h(S(a_{(1)})a_{(2)})+m(\{S(a_1),
a_2\})\\=&h(\eta\varepsilon(a))+m(\{S(a_1),
a_2\})=\varepsilon(a)h(1_A)+m(\{S(a_1), a_2\})=m(\{S(a_1), a_2\}),
\end{align*}
which imply that $u^e(S^e\otimes I)\Delta^e=\eta^e\varepsilon^e$ if
and only if $\{S(a_{(1)}), a_{(2)}\}=0$, since $m$ is injective.
Note that for all $a\in A$, we have $\{a_{(1)}, S(a_{(2)})\}=0$ iff
$\{S(a_{(1)}), a_{(2)}\}=0$ by Lemma \ref{lem324}. Similarly, we can
prove the formula $u^e(I\otimes S^e)\Delta^e=\eta^e\varepsilon^e$ if
and only if $\{a_{(1)}, S(a_{(2)})\}=0$. Therefore, we finish the
proof.
\end{proof}

It is not hard to show that the following two corollaries are also true. Namely,
\begin{corollary}
Let $(A, u_A, \eta_A, \Delta_A, \varepsilon_A, S_A, \{\cdot, \cdot\}_A,
d_A)$ and $(B, u_B, \eta_B, \Delta_B, \varepsilon_B, S_B, \{\cdot, \cdot\}_B,
d_B)$ be any two DG Poisson Hopf algebras with Poisson brackets of degree $0$. Then
\begin{enumerate}
  \item $(A\otimes B)^e=A^e\otimes B^e$ is a DG bialgebra.
  \item If $\{S_A(a_{(1)}), a_{(2)}\}=0$ and $\{S_B(b_{(1)}), b_{(2)}\}=0$, where $\Delta(a)=a_{(1)}\otimes a_{(2)}, \Delta(b)=b_{(1)}\otimes b_{(2)}$ for all $a\in A$ and $b\in B$, then $(A\otimes B)^e$ is a DG Hopf algebra.
\end{enumerate}
\end{corollary}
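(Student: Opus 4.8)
The plan is to reduce both parts to results already proved, through the identification $(A\otimes B)^e\cong A^e\otimes B^e$. First I would use the tensor-product construction for DG Poisson Hopf algebras established above to record that $A\otimes B$, with the structure maps $u,\eta,\Delta,\varepsilon,S,\{\cdot,\cdot\},d$ displayed there, is again a DG Poisson Hopf algebra, hence in particular a DG Poisson bialgebra; note that its underlying DG Poisson algebra is precisely the one from Lemma~\ref{lem22}. Then, applying Lemma~\ref{lem24} with bracket degree $p=0$, the triple $(A^e\otimes B^e,\ m_A\otimes m_B,\ m_A\otimes h_B+h_A\otimes m_B)$ satisfies the universal property of Definition~\ref{defn2} for $A\otimes B$, so by the uniqueness of the universal enveloping algebra we may identify $(A\otimes B)^e$ with $A^e\otimes B^e$ as DG algebras; I write $m=m_A\otimes m_B$ and $h=m_A\otimes h_B+h_A\otimes m_B$ for the two canonical maps.

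For part~(1): since $A\otimes B$ is a DG Poisson bialgebra, Theorem~\ref{thm35} supplies DG algebra maps $\Delta^e,\varepsilon^e$ on $(A\otimes B)^e$ turning it into a DG bialgebra, and this transports to a DG bialgebra structure on $A^e\otimes B^e$ whose underlying DG algebra is already the tensor-product one by Lemma~\ref{lem24}. To see that the comultiplication is the literal tensor-product comultiplication $(I\otimes T\otimes I)(\Delta^e_A\otimes\Delta^e_B)$, I would check, using the explicit $\Delta(a\otimes b)=(-1)^{|a_{(2)}||b_{(1)}|}a_{(1)}\otimes b_{(1)}\otimes a_{(2)}\otimes b_{(2)}$, that the latter map satisfies the two defining relations $\Delta^e m=(m\otimes m)\Delta$ and $\Delta^e h=(m\otimes h+h\otimes m)\Delta$ of Theorem~\ref{thm35}; the uniqueness clause in Definition~\ref{defn2} then forces it to equal $\Delta^e$, and similarly for $\varepsilon^e$.

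For part~(2): by Theorem~\ref{coro36}(2), $(A\otimes B)^e$ is a DG Hopf algebra if and only if $\{S(c_{(1)}),c_{(2)}\}=0$ in $A\otimes B$ for all $c$, where $S$ and $\Delta$ are the antipode and comultiplication of $A\otimes B$; by linearity it suffices to check $c=a\otimes b$. Substituting $\Delta(a\otimes b)=(-1)^{|a_{(2)}||b_{(1)}|}a_{(1)}\otimes b_{(1)}\otimes a_{(2)}\otimes b_{(2)}$, $S(a\otimes b)=S_A(a)\otimes S_B(b)$, and the bracket formula of the tensor-product construction into $\{S((a\otimes b)_{(1)}),(a\otimes b)_{(2)}\}$, the sign $(-1)^{|a_{(2)}||b_{(1)}|}$ coming from $\Delta$ is exactly cancelled by the sign coming from the bracket on $A\otimes B$ (here using that $S_B$ has degree $0$, so $|S_B(b_{(1)})|=|b_{(1)}|$), and the expression collapses to
\[
\{S_A(a_{(1)}),a_{(2)}\}_A\otimes S_B(b_{(1)})b_{(2)}\;+\;S_A(a_{(1)})a_{(2)}\otimes\{S_B(b_{(1)}),b_{(2)}\}_B,
\]
a double Sweedler sum that factors into the $A$-part times the $B$-part in each summand; each summand then vanishes by one of the hypotheses $\{S_A(a_{(1)}),a_{(2)}\}=0$ and $\{S_B(b_{(1)}),b_{(2)}\}=0$. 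Hence the hypothesis of Theorem~\ref{coro36}(2) holds for $A\otimes B$ (and by Lemma~\ref{lem324} so does $\{c_{(1)},S(c_{(2)})\}=0$), and therefore $(A\otimes B)^e$ is a DG Hopf algebra. I expect the only slightly delicate steps to be this sign cancellation and, if one insists on reading ``$=$'' in part~(1) at the level of bialgebras, the bookkeeping identifying the transported coalgebra structure with the literal tensor-product one; everything else is a direct invocation of Lemmas~\ref{lem22}, \ref{lem24}, \ref{lem25}, \ref{lem324} and Theorems~\ref{thm35}, \ref{coro36}.
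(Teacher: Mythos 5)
The paper leaves this corollary unproved (``it is not hard to show''), and your argument is exactly the intended one: identify $(A\otimes B)^e$ with $A^e\otimes B^e$ as DG algebras via Lemma~\ref{lem24} together with the tensor-product proposition making $A\otimes B$ a DG Poisson Hopf algebra, then invoke Theorem~\ref{thm35} for part (1) and Theorem~\ref{coro36}(2) for part (2). Your key computation in part (2) is correct: the factor $(-1)^{|a_{(2)}||b_{(1)}|}$ from $\Delta(a\otimes b)$ cancels against $(-1)^{|a_{(2)}||S_B(b_{(1)})|}=(-1)^{|a_{(2)}||b_{(1)}|}$ coming from the bracket on $A\otimes B$ (using $|S_B(b_{(1)})|=|b_{(1)}|$), and the resulting double Sweedler sum factors as $\bigl(\textstyle\sum\{S_A(a_{(1)}),a_{(2)}\}\bigr)\otimes\bigl(\sum S_B(b_{(1)})b_{(2)}\bigr)$ plus the symmetric term, so each summand vanishes by hypothesis.
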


\begin{corollary}
Let $(A, u, \eta, \Delta, \varepsilon, S, \{\cdot, \cdot\},
d)$ be a DG Poisson Hopf algebra. Then
\begin{enumerate}
  \item $(A^{op})^e=(A^e)^{op}$ is a DG bialgebra.
  \item $(A^{op})^e$ is a DG Hopf algebra if and only if $\{S(a_{(1)}), a_{(2)}\}=0$ , where $\Delta(a)=a_{(1)}\otimes a_{(2)}$ for all $a\in A$.
\end{enumerate}
\end{corollary}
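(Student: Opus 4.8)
The plan is to deduce both parts from Theorems \ref{thm35} and \ref{coro36} applied to the DG Poisson Hopf algebra $A^{op}$ of Example \ref{exa30}, after identifying $(A^{op})^e$ with $(A^e)^{op}$. For part (1): by Example \ref{exa30}, $(A^{op},u^{op},\eta,\Delta^{op},\varepsilon,S,\{\cdot,\cdot\}^{op},d)$ is a DG Poisson Hopf algebra, hence in particular a DG Poisson bialgebra, so Theorem \ref{thm35} already shows $(A^{op})^e$ is a DG bialgebra, with $\Delta^{(A^{op})^e}m=(m\otimes m)\Delta^{op}$, $\Delta^{(A^{op})^e}h=(m\otimes h+h\otimes m)\Delta^{op}$, $\varepsilon^{(A^{op})^e}m=\varepsilon$, $\varepsilon^{(A^{op})^e}h=0$, where I write $m,h$ for the canonical maps into $(A^{op})^e$. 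Since $A$ is graded commutative one has $u^{op}=u$ and $\{\cdot,\cdot\}^{op}=-\{\cdot,\cdot\}=\{\cdot,\cdot\}_1$, so $A^{op}$ and $A_1$ agree as DG Poisson algebras; Proposition \ref{prop34} then identifies $(A^{op})^e$ with $(A^e)^{op}$ as DG algebras, with $m,h$ the same maps as for $A$. It remains to check the coalgebra structures match, i.e. that $\Delta^{(A^{op})^e}$ equals the co-opposite comultiplication $T\Delta^e$ of $(A^e)^{op}$. Combining Theorem \ref{thm35} for $A$ (namely $\Delta^em=(m\otimes m)\Delta$, $\Delta^eh=(m\otimes h+h\otimes m)\Delta$) with $T(m\otimes m)=(m\otimes m)T$ and $T(m\otimes h+h\otimes m)=(m\otimes h+h\otimes m)T$, which hold because $m$ and $h$ have degree $0$, gives $(T\Delta^e)m=(m\otimes m)T\Delta=(m\otimes m)\Delta^{op}=\Delta^{(A^{op})^e}m$ and likewise on $h$. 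As $m(A)$ and $h(A)$ generate $(A^{op})^e=A^{eop}$ and both $T\Delta^e$ and $\Delta^{(A^{op})^e}$ are DG algebra homomorphisms into the same DG algebra $(A^e\otimes A^e)^{op}=(A^{op})^e\otimes(A^{op})^e$ (this equality coming from Lemma \ref{lem24}, Proposition \ref{prop34} and the identity $(A\otimes A)^{op}=A^{op}\otimes A^{op}$), the two maps agree; the counits agree similarly and $d^e$ is literally the same map. Hence $(A^{op})^e=(A^e)^{op}$ as DG bialgebras.

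For part (2) I would apply Theorem \ref{coro36}(2) to $A^{op}$, obtaining that $(A^{op})^e$ is a DG Hopf algebra if and only if $\{S((a)^{op}_{(1)}),(a)^{op}_{(2)}\}^{op}=0$ for all $a$, where $\Delta^{op}(a)=(a)^{op}_{(1)}\otimes(a)^{op}_{(2)}$ and the antipode of $A^{op}$ is the original $S$ by Example \ref{exa30}. Substituting $\Delta^{op}=T\Delta$ and $\{\cdot,\cdot\}^{op}=-\{\cdot,\cdot\}$, and using graded antisymmetry of $\{\cdot,\cdot\}$ together with $|S(x)|=|x|$ to absorb the Koszul sign produced by $T$, this condition collapses to $\{a_{(1)},S(a_{(2)})\}=0$ for all $a$, which by Lemma \ref{lem324} is equivalent to $\{S(a_{(1)}),a_{(2)}\}=0$ for all $a$. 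Since the antipode of a DG bialgebra is unique, part (1) then forces $(A^{op})^e=(A^e)^{op}$ to be a DG Hopf algebra exactly under this condition.

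The only real work is the sign bookkeeping: confirming that forming opposites is compatible with the tensor square, so that $(A^e\otimes A^e)^{op}$ is indeed the target of $\Delta^{(A^{op})^e}$, and carrying out the Sweedler-sign manipulation that rewrites the criterion of Theorem \ref{coro36}(2) for $A^{op}$ purely in terms of $A$. Both are routine but error-prone; everything else is formal, resting on the universal property together with Theorems \ref{thm35} and \ref{coro36}, Lemma \ref{lem324} and Proposition \ref{prop34}.
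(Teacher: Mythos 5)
Your proof is correct and follows what is clearly the intended route: the paper omits the argument (``it is not hard to show\dots''), and your deduction --- identifying $A^{op}$ with $A_1$ via graded commutativity, invoking Proposition \ref{prop34} for the algebra identification $(A^{op})^e=(A^e)^{op}$, transporting Theorem \ref{thm35} along $T(m\otimes m)=(m\otimes m)T$ and $T(m\otimes h+h\otimes m)=(m\otimes h+h\otimes m)T$, and reducing the antipode criterion of Theorem \ref{coro36} for $A^{op}$ to $\{a_{(1)},S(a_{(2)})\}=0$ and then to $\{S(a_{(1)}),a_{(2)}\}=0$ via Lemma \ref{lem324} --- is exactly the expected one. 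The sign bookkeeping you flag as the only real work checks out.
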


\begin{exa}
Retain the notations of Example \ref{exa31}. From Theorem \ref{thm35}, we can see that the universal enveloping algebra $B^e$ is a DG bialgebra. But \[\{S(z_1), z_2\}=\{S(z), 1\}+\{S(1), z\}+\{S(-2x), y\}=2y\neq 0,\] we can't conclude that $B^e$ is a DG Hopf algebra by Theorem \ref{coro36}.
\end{exa}

\begin{exa}
Let $(L, d_L)$ be a finite dimensional DG Lie algebra over $\Bbbk$
with standard graded Lie bracket $[\cdot, \cdot]$ and let $(S(L), u,
\eta)$ be the graded symmetric algebra of $L$, i.e.,

$$S(L):=\frac{T(L)}{(a\otimes b-(-1)^{|a||b|}b\otimes a)},$$

for any $a, b\in L$, where $u$ is a multiplication and $\eta$ is a
unit. Fix a $\Bbbk$-basis $x_1, x_2, \cdots, x_n$ of $L$. Note that
$S(L)$ is the graded commutative polynomial ring $k[x_1, x_2, \cdots,
x_n]$. As in Example \ref{exa32}, we know $S(L)$ is a DG Poisson
algebra and it also has a DG Hopf structure which is compatible with
the Poisson bracket. That is, $S(L)$ is a DG Poisson Hopf algebra
with structure
\begin{center}
$\{a, b\}_{S(L)}:=[a, b], \ \ \Delta(a)=a\otimes 1+1\otimes a, \ \
\varepsilon(a)=0, \ \ S(a)=-a$,
\end{center}
for all $a, b\in L$. Observe that the universal enveloping algebra
$(S(L)^e, m, h)$ is the DG algebra generated by $x_1, \cdots, x_n,
y_1, \cdots, y_n$ subject to the relation
\begin{align*}
x_ix_j&=(-1)^{|x_i||x_j|}x_jx_i,\\
y_ix_j&=(-1)^{|x_i||x_j|}x_jy_i+\{x_i, x_j\},\\
y_iy_j&=(-1)^{|x_i||x_j|}y_jy_i+\psi(\{x_i, x_j\}),
\end{align*}
for all $i, j=1, \cdots, n$ and, $m$ and $h$ are given by $m(x_i)=x_i$
and $h(x_i)=y_i$, respectively, where $\psi: S(L)\rightarrow
S(L)\langle y_i|~i=1, \cdots, n\rangle$ is a $\Bbbk$-linear map of degree 0 defined by $\psi(x_i)=y_i$ for all $i=1, \cdots, n$. By Lemma
\ref{lem323} and Theorem \ref{coro36}, the universal enveloping
algebra $S(L)^e$ is a DG Hopf algebra with Hopf structure
 \begin{align*}
\Delta^e(x_i)&=x_i\otimes 1+1\otimes x_i
&   \Delta^e(y_i)&=y_i\otimes 1+1\otimes y_i\\
\varepsilon^e(x_i)&=0  &   \varepsilon^e(y_i)&=0\\
S^e(x_i)&=-x_i  &    S^e(y_i)&=-y_i,
\end{align*}
for all $i=1, \cdots, n$.
\end{exa}

\bigskip
\bibliographystyle{amsplain}

\end{document}